\newtheorem{definition}{Definition}[section]
\newtheorem{theorem}[definition]{Theorem}
\newtheorem{remark}[definition]{Remark}
\newtheorem{final Remarks}[definition]{Final Remarks}
\newtheorem{lemma}[definition]{Lemma}
\newtheorem{corollary}[definition]{Corollary}
\numberwithin{equation}{section}
\newcommand{\colim}{\mathit{colim}}
\begin{document}
\title{Homotopy Inertia Groups and Tangential Structures}
\vspace{2cm}

\author{Ramesh Kasilingam}

\email{rameshkasilingam.iitb@gmail.com  ; mathsramesh1984@gmail.com  }
\address{Statistics and Mathematics Unit,
Indian Statistical Institute,
Bangalore Centre, Bangalore - 560059, Karnataka, India.}

\date{}
\subjclass [2010] {Primary : {57R55, 57R65; Secondary : 55Q45, 55Q55}}
\keywords{Projective plane like manifolds, smooth structures, homotopy inertia groups, cohomotopy groups.}

\maketitle
\begin{abstract}
We show that if $M$ and $N$ have the same homotopy type of simply connected closed smooth $m$-manifolds such that the integral and mod-$2$ cohomologies of $M$ vanish in odd degrees, then their homotopy inertia groups are equal. Let $M^{2n}$ be a closed $(n-1)$-connected $2n$-dimensional smooth manifold. We show that, for $n=4$, the homotopy inertia group of $M^{2n}$ is trivial and if $n=8$ and $H^n(M^{2n};\mathbb{Z})\cong \mathbb{Z}$, then the homotopy inertia group of $M^{2n}$ is also trivial. We further compute the group $\mathcal{C}(M^{2n})$ of concordance classes of smoothings of $M^{2n}$ for $n=8$. Finally, we show that if a smooth manifold $N$ is tangentially homotopy equivalent to $M^8$, then $N$ is diffeomorphic to the connected sum of $M^8$ and a homotopy $8$-sphere.
\end{abstract}
\section{Introduction}
Let $\Theta_m$ be the group of smooth homotopy spheres defined by M. Kervaire and J. Milnor in \cite{KM63}. Recall that the collection of homotopy spheres $\Sigma$  which admit a diffeomorphism $M\to M\#\Sigma$ form a subgroup $I(M)$ of $\Theta_m$, called the inertia group of $M$, where we regard the connected sum $M\#\Sigma^m$ as a smooth manifold with the same underlying topological space as $M$ and with smooth structure differing from that of $M$ only on an $m$-disc. The homotopy inertia group $I_h(M)$ of $M^m$ is a subset of the inertia group consisting of homotopy spheres $\Sigma$ for which the identity map $\rm{id}:M\to M\#\Sigma^m$ is homotopic to a diffeomorphism. Similarly, the concordance inertia group of $M^m$, $I_c(M^m)\subseteq \Theta_m$, consists of those homotopy spheres $\Sigma^m$ such that $M$ and $M\#\Sigma^m$ are concordant. Among these groups, only the concordance inertia group $I_c(M)$ is a homotopy type invariant of $M$ (\cite{Bru71, Kaw69}). Also, in \cite{Fra84}, it was shown that the homotopy inertia group $I_h(M)$ is an $h$-cobordism invariant for manifolds of dimension at least eight.\\
\indent The paper is organized as following. Preliminary definitions and notations are given in section 2. In section 3, we show that if $M$ and $N$ have the same homotopy type of simply connected closed smooth $m$-manifolds such that the integral and mod-$2$ cohomologies of $M$ vanish in odd degrees, then their homotopy inertia groups are equal. Thus, the homotopy inertia group $I_h(M)$ is a homotopy type invariant for closed oriented simply connected manifolds with vanishing the integral and mod-$2$ cohomology in odd degrees. In fact, we show that  if $M^{m}$  $(m\geq  5)$ is  a closed simply connected smooth $m$-manifold such that the integral and mod-$2$ cohomologies of $M^{m}$ vanish in odd degrees, then the natural forgetful map  $F_{Con}:\mathcal{C}^{Diff}(M)\longrightarrow \mathcal{S}^{Diff}(M)$ is injective, where $\mathcal{C}^{Diff}(M)$ is the group of concordance classes of smooth structures on $M^m$ and $\mathcal{S}^{Diff}(M)$ is the set of diffeomorphism classes of homotopy-smooth structures on $M^m$. Let $M^{2n}$ be a closed $(n-1)$-connected $2n$-dimensional smooth manifold. In this section, we also show that, for $n=4$, the homotopy inertia group $I_h(M^{2n})$ is trivial and if $n=8$ and $H^n(M^{2n};\mathbb{Z})\cong \mathbb{Z}$, then the homotopy inertia group $I_h(M^{2n})$ is also trivial.\\
 In section 4, we compute the $0^{th}$ reduced stable cohomotopy group $\widetilde{\pi}^0(M^{2n})$ for $n=4,8$ and the group $\mathcal{C}(M^{2n})$ of concordance classes of smoothings of $M^{2n}$ for $n=8$, which will be used in Section 5.\\
In section 5, by use of surgery theoretic structure sets, we give a diffeomorphism classification of closed smooth manifolds in the tangential homotopy type of $M^{2n}$. Recall that a homotopy equivalence of smooth manifolds of the same dimension $f:L\to N$ is called tangential if the vector bundles $f^{*}(T(N))$ and $T(L)$ are stably isomorphic. In \cite{MTW80} it was studied that under what conditions tangentially homotopy equivalent manifolds are necessarily homeomorphic. Examples of non-homeomorphic tangentially homotopy equivalent manifolds were given in \cite{Cro02, CN14}. In this section, we show that if $n\equiv 0~ {\rm mod~ 4}$, $n>0$ and a smooth manifold $N$ is tangential homotopy equivalent to $M^{2n}$, then $N$ is homeomorphic to $M^{2n}$. We also show that if a smooth manifold $N$ is tangentially homotopy equivalent to $M^{8}$, then $N$ is diffeomorphic to $M^8\#\Sigma$, where $\Sigma\in \Theta_8$ is a homotopy $8$-sphere.\\
\noindent
{\bf Notation: } We denote by $[X;Y]$ the set of all free homotopy classes of maps from $X$ to $Y$. If $X$, $Y$ are well pointed
spaces, and if $Y$ is $0$-connected, then the fundamental group $\pi_1(Y)$ acts on the set $[X;Y]_0$ of based homotopy classes; the set $[X;Y]$ of all free homotopy classes can be identified with the orbit set of this
action (\cite[Section III.1]{whi78}). If $Y$ is an $H$-space (or if $Y$ is 1-connected), this action is trivial, so $[X; Y]=[X;Y]_0$. Denote by $O=\colim_n O(n)$, $PL=\colim_n PL(n)$, $Top= \colim_n Top(n)$, $G=\colim_n G(n)$, the direct limit of the groups of orthogonal transformations, PL homeomorphisms, homeomorphisms and homotopy equivalences, respectively. We also denote $SPL=\colim_n SPL(n)$ and $SG=\colim_n SG(n)$, the direct limit of the groups of orientation-preserving PL homeomorphisms and homotopy equivalences, respectively.\\
In this paper, all the manifolds will be closed, smooth, oriented and connected, and all the homotopy equivalences, homeomorphisms and diffeomorphisms are assumed to preserve orientation, unless otherwise stated.

\section{Preliminaries}\label{sec2}
\begin{definition}\rm
		\begin{itemize}
			\item[(a)] A homotopy $m$-sphere $\Sigma^m$ is an oriented smooth closed manifold homotopy equivalent to the standard unit sphere $\mathbb{S}^m$ in $\mathbb{R}^{m+1}$.
			\item[(b)]A homotopy $m$-sphere $\Sigma^m$ is said to be exotic if it is not diffeomorphic to $\mathbb{S}^m$.
			\item[(c)] Two homotopy $m$-spheres $\Sigma^{m}_{1}$ and $\Sigma^{m}_{2}$ are said to be equivalent if there exists an orientation preserving diffeomorphism $f:\Sigma^{m}_{1}\to \Sigma^{m}_{2}$.
		\end{itemize}
		The set of equivalence classes of homotopy $m$-spheres is denoted by $\Theta_m$.  
\end{definition}
\begin{definition}\rm($Cat=Diff~~{\rm{or}}~~ PL$)
	Let $M$ be a closed $Cat$-manifold. Let $(N,f)$ be a pair consisting of a closed $Cat$-manifold $N$ together with a homeomorphism $f:N\to M$. Two such pairs $(N_{1},f_{1})$ and $(N_{2},f_{2})$ are concordant provided there exists a $Cat$-isomorphism $g:N_{1}\to N_{2}$ such that the composition $f_{2}\circ g$ is topologically concordant to $f_{1}$, i.e., there exists a homeomorphism $F: N_{1}\times [0,1]\to M\times [0,1]$ such that $F_{|N_{1}\times 0}=f_{1}$ and $F_{|N_{1}\times 1}=f_{2}\circ g$. The set of all such concordance classes is denoted by $\mathcal{C}^{Cat}(M)$.\\
	We will denote the class in $\mathcal{C}^{Cat}(M)$ of $(N,f)$ by $[N,f]$. The base point of $\mathcal{C}^{Cat}(M)$ is the equivalence class $[M,Id]$ of $Id : M\to M$.
\end{definition}
\begin{definition}\rm \label{smoo.stru}($Cat=Diff~~{\rm{or}}~~ Top$-structure sets)	Let $M$ be a closed $Cat$-manifold. We define the $Cat$-structure set $\mathcal{S}^{Cat}(M)$ to be the set of equivalence classes of pairs $(N,f)$ where $N$ is a closed $Cat$-manifold and $f: N\to M$ is a homotopy equivalence.
	And the equivalence relation is defined as follows:
	\begin{center}
		$(N_1,f_1)\sim  (N_2,f_2)$ if there is a $Cat$-isomorphism $h:N_1\to N_2$ \\
		such that $f_2\circ h$ is homotopic to $f_1$.
	\end{center}
	We will denote the class in $\mathcal{S}^{Cat}(M)$ of $(N,f)$ by $[N,f]$. The base point of $\mathcal{S}^{Cat}(M)$ is the equivalence class $[M,Id]$ of $Id : M\to M$. Denote by $[M^m\#\Sigma^m]$ the  equivalence class of $(M^m\#\Sigma^m, \rm{Id})$ in $\mathcal{S}^{Cat}(M)$ or $\mathcal{C}^{Diff}(M)$. (Note that $[M^m\#\mathbb{S}^m]$ is the class of $(M^m, \rm{Id})$.)
\end{definition}
\begin{definition}\rm{
		Let $M^m$ be a closed smooth $m$-dimensional manifold. The inertia group $I(M)\subset \Theta_{m}$ is defined as the set of $\Sigma \in \Theta_{m}$ for which there exists a diffeomorphism $\phi :M\to M\#\Sigma$.\\
		\indent Define the homotopy inertia group $I_h(M)$ to be the set of all $\Sigma\in I(M)$ such that there exists a diffeomorphism $M\to M\#\Sigma$ which is homotopic to $\rm{Id}:M \to M\#\Sigma$.\\
		\indent Define the concordance inertia group $I_c(M)$ to be the set of all $\Sigma\in I_h(M)$ such that $M\#\Sigma$ is concordant to $M$.}
\end{definition}
Observe that there are natural forgetful maps  $F_{Con} : \mathcal{C}^{Diff}(M)\to \mathcal{S}^{Diff}(M)$ and $F_{Diff} : \mathcal{S}^{Diff}(M)\to \mathcal{S}^{Top}(M)$. In \cite{Wei90}, Weinberger showed that the forgetful map $F_{Diff} : \mathcal{S}^{Diff}(M)\to \mathcal{S}^{Top}(M)$ is finite-to-one with image containing a subgroup of finite index. It also follows that the forgetful maps $F_{Diff} : \mathcal{S}^{Diff}(M)\to \mathcal{S}^{Top}(M)$ and $F_{Con} : \mathcal{C}^{Diff}(M)\to \mathcal{S}^{Diff}(M)$ fit into a short exact sequence of pointed sets:
$$\mathcal{C}^{Diff}(M)\stackrel{F_{Con}} {\longrightarrow} \mathcal{S}^{Diff}(M)\stackrel{F_{Diff}} {\longrightarrow} \mathcal{S}^{Top}(M).$$
\begin{definition}\rm (\cite{Bro72, Wal99, Luc02, Ran92, Ran02})
	Let $N$ and $M$ be closed smooth manifolds.
	A degree one normal map $(N,f,b):N\to M$ is a degree one map $f:N\to M$ together with a stable bundle map $b:\nu_N\to \xi$ where $\nu_N$ is the stable normal bundle of $N$, $b$ covers $f$ and $\xi$ is some stable vector bundle over $M$ (necessarily fibre homotopy equivalent to $\nu_M$).\\
	A normal bordism of degree one normal maps $(N_i,f_i,b_i)$, $i=0$,$1$ is a degree one normal map $(Z,g,c):Z\to M\times [0,1]$ restricting to $(N_i,f_i,b_i)$ over $M\times \{i\}$.\\
	The  set of normal bordism classes of degree one normal maps into a closed smooth manifold $M$ is called the normal structure set of $M$, and we denote it by $\mathcal{N}^{Diff}(M)$.
\end{definition}
The normal structure set $\mathcal{N}^{Diff}(M)$ of an $m$-dimensional smooth closed simply connected manifold $M^m$ fits into the
surgery exact sequence of pointed sets
\begin{equation}\label{tsurgry}
\cdots\longrightarrow\mathcal{N}^{Diff}(M\times [0,1])\stackrel{\sigma}{\longrightarrow}L_{m+1}(\mathbb{Z}) \stackrel{\theta}{\longrightarrow}\mathcal{S}^{Diff}(M) \stackrel{\eta}{\longrightarrow}\mathcal{N}^{Diff}(M)\stackrel{\sigma}{\longrightarrow} L_{m}(\mathbb{Z}),
\end{equation}
where the map $\eta:\mathcal{S}^{Diff}(M)\to \mathcal{N}^{Diff}(M)$ is defined by mapping $[N,f]$ to $[(N,f,b):N\to M]$, here $b:\nu_N\to (f^{-1})^{*}(\nu_N)$ is the canonical bundle map and $\xi=(f^{-1})^{*}(\nu_N)$, $L_{*}(\mathbb{Z})$ are the surgery obstruction groups, the maps $\sigma:\mathcal{N}^{Diff}(M)\to L_{m}(\mathbb{Z})$ and $\sigma:\mathcal{N}^{Diff}(M\times [0,1])\to L_{m+1}(\mathbb{Z})$ are the surgery obstruction maps and the map $\theta:L_{m+1}(\mathbb{Z})\to \mathcal{S}^{Diff}(M)$ is the action of $L_{m+1}(\mathbb{Z})$ on $\mathcal{S}^{Diff}(M)$. Using  identity maps as base points we have Sullivan's familiar identifications $\mathcal{N}^{Diff}(M)=[M,G/O]$ and $\mathcal{N}^{Diff}(M\times [0,1])=[SM,G/O]$
where $SM$ is the suspension of $M$. 
\begin{definition}\cite{MTW80,CH15}\rm
	Let $M$ be a closed smooth $m$-manifold with stable normal bundle $\nu_M$ of rank $k\gg m$. The smooth tangential structure set of
	$M$: $$\mathcal{S}^{TDiff}(M)=\{(N,f,b)~|~f:N\to M,~b:\nu_N\to \nu_M\}/\simeq,$$ 
	consists of equivalences classes of triples $(N,f,b)$, where $N$ is a smooth manifold, $f:N\to M$ is a homotopy equivalence and $b:\nu_N\to \nu_M$ is a map of stable bundles. Two structures $(N_0,f_0,b_0)$ and $(N_1,f_1,b_1)$
	are equivalent if there is an $s$-cobordism $(U, N_0, N_1, F, B)$ and where $F:U\to M$ is a simple homotopy equivalence, $B:\nu_U\to \nu_M$ is a bundle map and these data restrict to $(N_0,f_0,b_0)$ and $(N_1,f_1,b_1)$ at the boundary of
	$U$.
\end{definition}
The tangential surgery exact sequence for a simply connected closed smooth manifold $M$
finishes with the following four terms :
\begin{equation}\label{tsurgry}
L_{m+1}(\mathbb{Z}) \stackrel{\theta}{\longrightarrow}\mathcal{S}^{TDiff}(M) \stackrel{\eta^t}{\longrightarrow}\mathcal{N}^{TDiff}(M)\stackrel{\sigma}{\longrightarrow} L_{m}(\mathbb{Z}),
\end{equation}
where the definition of $\mathcal{N}^{TDiff}(M)$ is similar to the definition of $\mathcal{S}^{TDiff}(M)$ except
that for representatives $(N,f,b)$ we require only that $f:N\to M$ is a degree one map and the equivalence relation is defined using normal cobordisms over $(M,\nu_M)$. The set $\mathcal{N}^{TDiff}(M)$ of tangential normal invariants of $M$ can be identified with $[M, SG]$.\\
Let $QM=\Omega^{\infty}\Sigma^\infty M=\colim_n \Omega^{n}\Sigma^n M$. Then $QM$ is an infinite loop space; i.e., the zero space of the $\Omega$-spectrum $\{Q\Sigma^n M\}$. For $M=S^0$, $QS^0$ has components $Q_kS^0$, $k\in\mathbb{Z}$, determined by the degree of self maps of spheres. We let $Q_kS^0$ be the $k$th component of $QS^0$. The space $QS^0$ is an $H$-space under the loop product $$\ast:QS^0\times QS^0\mapsto QS^0$$ which satisfies $$\ast:Q_sS^0\times Q_tS^0\mapsto Q_{s+t}S^0,$$
and for any space $M$ there is a free and transitive action $$[M,Q_1S^0]\times [M,Q_0S^0]\stackrel{\ast}{\rightarrow} [M,Q_1S^0]~~([\phi],[\alpha])\mapsto [\phi]\ast [\alpha].$$ It is important to note that $Q_0S^0$ has the same homotopy type of $SG$ and the $H$-space structure on $SG$ given by composition of maps corresponds not to the loop product on $Q_0S^0$ (\cite{Tsu71}). Therefore, $[M, SG]=[M,Q_0S^0]=\widetilde{\pi}^0(M)$ as sets, where $\widetilde{\pi}^0(M)$ is the $0$th reduced stable cohomotopy group of $M$. Actually, $\widetilde{\pi}^0(M)$ is a ring, and, as groups, $[M, SG]\cong 1+\widetilde{\pi}^0(M)$ where the addition on the right is given by $(1+\alpha)(1+\beta)=1+\alpha+\beta+\alpha \beta$ (\cite{Har69}).\\

\section{The Homotopy Inertia Group}\label{sec3}
In \cite{Kas15}, the concordance inertia group $I_c(M^{2n})$ and the inertia group $I(M^{2n})$ of an $(n-1)$-connected $2n$-smooth manifold $M^{2n}$ were considered for $n=4, 8$. In this section, we compute the homotopy inertia group of $M^{2n}$.
\begin{theorem}\label{forget}
	Let $m\geq  5$ and $M^{m}$ be a closed simply connected smooth $m$-manifold such that the integral and mod-$2$ cohomologies of $M^{m}$ vanish in odd degrees. Then $F_{Con}:\mathcal{C}^{Diff}(M)\longrightarrow \mathcal{S}^{Diff}(M)$ is injective.
\end{theorem}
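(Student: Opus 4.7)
The plan is to translate injectivity of $F_{Con}$ into a purely homotopy-theoretic statement about the fibre inclusion $Top/O \hookrightarrow G/O$, and then verify that statement by a Postnikov tower induction driven by the cohomological hypothesis on $M$.

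First I would use the Kirby--Siebenmann smoothing theorem (available since $m \geq 5$) to identify $\mathcal{C}^{Diff}(M)$ with $[M, Top/O]$, sending the base smoothing $[M, \mathrm{Id}]$ to the null map; combined with Sullivan's identification $\mathcal{N}^{Diff}(M) \cong [M, G/O]$, the composition $\eta \circ F_{Con}$ becomes the map $i_*: [M, Top/O] \to [M, G/O]$ induced by the inclusion of the fibre in the fibration $Top/O \to G/O \to G/Top$. Since injectivity of $\eta \circ F_{Con}$ forces injectivity of $F_{Con}$, it suffices to prove $i_*$ is injective.

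All three spaces in this fibration are infinite loop spaces and the maps are $H$-maps, so the Puppe sequence gives an exact sequence of abelian groups
\[
[M, \Omega(G/Top)] \xrightarrow{\partial} [M, Top/O] \xrightarrow{i_*} [M, G/O],
\]
reducing the problem to showing $[M, \Omega(G/Top)] = 0$. Writing $Y := \Omega(G/Top)$, the identification $\pi_*(G/Top) = L_*(\mathbb{Z})$ gives $\pi_{4k-1}Y = \mathbb{Z}$, $\pi_{4k+1}Y = \mathbb{Z}/2$, and $\pi_n Y = 0$ for $n$ even. I would then induct up the Postnikov tower $\cdots \to Y_n \to Y_{n-1} \to \cdots$: assuming $[M, Y_{n-1}] = 0$, every map $M \to Y_n$ is a lift of the null map, so represents an element of a torsor over $H^n(M; \pi_n Y)$, which equals $H^{4k-1}(M;\mathbb{Z}) = 0$ when $n = 4k - 1$, $H^{4k+1}(M;\mathbb{Z}/2) = 0$ when $n = 4k + 1$, and is trivial when $\pi_n Y = 0$. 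Thus $[M, Y_n] = 0$, and finite-dimensionality of $M$ truncates the tower to give $[M, Y] = 0$.

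The only step requiring real conceptual care is the Postnikov induction: the tower of $G/Top$ carries non-trivial $k$-invariants, in particular at the prime $2$ (via Sullivan's description of $G/Top_{(2)}$), so a naive obstruction-theoretic estimate would not automatically give vanishing. However, since the induction only ever lifts the \emph{null} map, every $k$-invariant pulls back to zero and the obstruction theory collapses to the single requirement $H^n(M; \pi_n Y) = 0$ at each odd $n$ --- which is exactly the stated hypothesis. The entire argument is therefore pinned to the odd-degree vanishing of the integral and mod-$2$ cohomology of $M$.
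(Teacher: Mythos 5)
Your proposal is correct, but it takes a genuinely different route from the paper's. The paper never leaves the PL world: it uses $\mathcal{C}^{PL}(M)\cong H^3(M;\mathbb{Z}_2)=0$ to replace the homeomorphisms $f_i$ by PL homeomorphisms $k_i$, and then deforms the homotopy equivalence $N_1\times I\to M\times I$ to a PL homeomorphism rel boundary via Sullivan's theorem, the obstructions lying in $H^{*}(N_1\times (I,\partial I);P_*)\cong H^{\mathrm{odd}}(N_1;\pi_{\mathrm{even}}(G/PL))$, which vanish by hypothesis; this produces a topological concordance directly. You instead compose $F_{Con}$ with the normal invariant $\eta$, use the Kirby--Siebenmann identification $\mathcal{C}^{Diff}(M)\cong [M,Top/O]$ (valid since $m\geq 5$), and reduce everything to injectivity of $[M,Top/O]\to [M,G/O]$, which you get from $[M,\Omega(G/Top)]=0$ by a Postnikov induction; the same numerical coincidence drives both proofs (homotopy of $G/Top$, like that of $G/PL$, concentrated in even degrees, against odd-degree integral and mod-$2$ cohomology of $M$ vanishing). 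Your route is cleaner in that $[M,\Omega(G/Top)]=0$ is exactly Fact~2 in the paper's proof of Theorem \ref{imge}, established there only for $(n-1)$-connected $2n$-manifolds, and it avoids Sullivan's PL deformation theorem; the paper's route avoids the one non-formal input you assert without justification, namely that under the identifications $\mathcal{C}^{Diff}(M)\cong[M,Top/O]$ and $\mathcal{N}^{Diff}(M)\cong[M,G/O]$ the composite $\eta\circ F_{Con}$ is induced by the fibre inclusion $Top/O\to G/O$. That compatibility of smoothing theory with the surgery normal invariant is classical and should be cited (cf.\ \cite{KS77}, \cite{Bru71}); note that any ambiguity in the identification is harmless for your purpose, since you only need the composite $\mathcal{C}^{Diff}(M)\to[M,G/O]$ to be injective. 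Your obstruction-theoretic step is sound as written: since only the constant map is ever lifted, the $k$-invariants play no role, the relevant groups are $H^{4k-1}(M;\mathbb{Z})$ and $H^{4k+1}(M;\mathbb{Z}_2)$, both zero by hypothesis, and injectivity of $i_*$ then follows either from the exact sequence of abelian groups (all spaces and maps being infinite loop) or, avoiding that, from the standard action of $[M,\Omega(G/Top)]$ on $[M,Top/O]$ whose orbits are the fibres of $i_*$.
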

\begin{proof}
For $i\in \{1,2\}$, let $[N_i,f_i]\in \mathcal{C}^{Diff}(M)$, where $N_i$ is a
closed smooth manifold and $f_i$ is a homeomorphism
from $N_i$ to $M$ such that $F_{Con}([N_1,f_1])=F_{Con}([N_2,f_2])$. Thus
$(N_1,f_1)$ and $(N_2,f_2)$ represent the same element in $\mathcal{S}^{Diff}(M)$, and therefore, there exists a diffeomorphism $\phi$
from $N_1$ to $N_2$ such that $f_2\circ \phi\simeq f_1$.\\
\indent Since each $(N_i,f_i)$ represents an element in
$\mathcal{C}^{PL}(M)$ which is isomorphic to $H^3(M;\mathbb{Z}_2)=0=\left \{ [M,Id] \right \}$, there is a PL-homeomorphism $k_i$ from $N_i$ to $M$ such that $Id\circ k_i$ coincides up to topological concordance, and hence up to homotopy with $f_i$. Therefore, $k_2\circ \phi \simeq k_1$, say under the homotopy $H$. Define the homotopy
equivalence $\widetilde{H}:N_1\times I\to M\times I$ by $\widetilde{H}(x, t)=(H(x,t),t)$, where
$(x, t)\in N_1\times I$. By \cite[Theorem 1]{Sul67}, $\widetilde{H}$ is homotopic to a PL homeomorphism $({\rm mod}~N_1\times \partial I)$. It is because such a deformation is obstructed by cohomology classes in $H^{*}(N_1\times (I, \partial I);P_*)$. Since $N_1$ can have nonzero cohomology groups in even dimensions and $P_i$ is zero for odd $i$, these cohomology groups are all zero. Thus, $k_2\circ \phi$ is topological concordant to $k_1$ and hence $Id\circ k_2\circ \phi$ to $Id\circ k_1$, and $f_2\circ \phi$ to $f_1$. This completes the proof by noting that
both $(N_1, f_1)$ and $(N_2, f_2)$ represent the same element in $\mathcal{C}^{Diff}(M)$.
\end{proof}
As a corollary to the result above, we have the following.
\begin{corollary}\label{forget1}
	Let $m\geq  7$ and $M^{m}$ be a closed simply connected smooth $m$-manifold such that the integral and mod-$2$ cohomologies of $M^{m}$ vanish in odd degrees. Then $I_c(M)=I_h(M)$.
\end{corollary}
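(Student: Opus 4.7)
The plan is to deduce the corollary as an almost immediate consequence of Theorem \ref{forget}. Since $I_c(M) \subseteq I_h(M)$ holds by the very definitions of these groups (a homotopy sphere which gives a concordance of $M$ with $M\#\Sigma$ in particular provides a diffeomorphism $M \to M\#\Sigma$ homotopic to the identity, once one unwinds the concordance definition), the only nontrivial direction is $I_h(M) \subseteq I_c(M)$.

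To establish this reverse containment, I would start by picking any $\Sigma \in I_h(M)$ and unpacking the definition of homotopy inertia group: there exists a diffeomorphism $\phi : M \to M\#\Sigma$ such that $\mathrm{Id} \circ \phi = \phi$ is homotopic to $\mathrm{Id} : M \to M\#\Sigma$. Under the equivalence relation defining $\mathcal{S}^{Diff}(M)$ (which asks precisely for a $Diff$-isomorphism $h$ with $f_2 \circ h$ homotopic to $f_1$), this says that the two pairs $(M\#\Sigma, \mathrm{Id})$ and $(M, \mathrm{Id})$ determine the same class in $\mathcal{S}^{Diff}(M)$, i.e.\ $[M\#\Sigma] = [M, \mathrm{Id}]$ in $\mathcal{S}^{Diff}(M)$.

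Now I would invoke Theorem \ref{forget}: under the cohomological hypotheses (and since $m \geq 7 \geq 5$), the forgetful map $F_{Con} : \mathcal{C}^{Diff}(M) \to \mathcal{S}^{Diff}(M)$ is injective. Both $[M\#\Sigma, \mathrm{Id}]$ and $[M, \mathrm{Id}]$ lie in $\mathcal{C}^{Diff}(M)$ (since the identity is in particular a homeomorphism), and they have equal images in $\mathcal{S}^{Diff}(M)$ by the previous step. Injectivity therefore forces $[M\#\Sigma, \mathrm{Id}] = [M, \mathrm{Id}]$ in $\mathcal{C}^{Diff}(M)$, which by the definition of the concordance class set means exactly that $M\#\Sigma$ is concordant to $M$, i.e.\ $\Sigma \in I_c(M)$.

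There is no real obstacle here; the work has already been done in Theorem \ref{forget}. The only point requiring a little care is checking that the equivalence relation defining $\mathcal{S}^{Diff}(M)$ matches the condition appearing in the definition of $I_h(M)$, and that the relation defining $\mathcal{C}^{Diff}(M)$ matches the condition in the definition of $I_c(M)$; both translations are immediate from Definitions 2.2--2.4 once one takes the representing homeomorphism to be $\mathrm{Id}$.
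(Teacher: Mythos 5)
Your proposal is correct and is exactly the deduction the paper intends (the paper states the corollary without spelling it out): $I_c(M)\subseteq I_h(M)$ is immediate from the definitions, and for the reverse inclusion a diffeomorphism $M\to M\#\Sigma$ homotopic to the identity shows $[M\#\Sigma,\mathrm{Id}]$ and $[M,\mathrm{Id}]$ agree in $\mathcal{S}^{Diff}(M)$, so injectivity of $F_{Con}$ from Theorem \ref{forget} forces them to agree in $\mathcal{C}^{Diff}(M)$, i.e.\ $\Sigma\in I_c(M)$. The only cosmetic remark is that in this paper $I_c(M)$ is by definition a subset of $I_h(M)$, so the easy inclusion needs no unwinding of the concordance relation at all.
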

\begin{remark}\rm
	\indent
	\begin{itemize}
		\item[(1)]If $M^{2n}$ is a closed simply connected smooth $2n$-manifold such that the integral cohomology of $M^{2n}$ vanishes in odd degrees, by Corollary \ref{forget1}, $I_h(M^{2n})=I_c(M^{2n})$.
		\item[(2)] If $M^{2n}$ is a closed smooth manifold homotopy equivalent to $\mathbb{C}\textbf{P}^n$, then $I_c(M^{2n})=0$  for $n\leq 8$ \cite{Kaw68}. Therefore, by Corollary \ref{forget1}, $I_h(M^{2n})=0$ $n\leq 8$.
	\end{itemize}
\end{remark}
By \cite[Theorem 2.7((i),(iii))]{Kas15} and Corollary \ref{forget1}, we have the following.
\begin{corollary}
Let $M^{2n}$ be a closed smooth $(n-1)$-connected $2n$-manifold. 
\begin{itemize}
	\item[(1)] If $n=4$, then $I_h(M^{2n})=0$.
	\item[(2)] If $n=8$ and $H^n(M;\mathbb{Z})\cong \mathbb{Z}$, then $I_h(M^{2n})=0$.
	\end{itemize}	
\end{corollary}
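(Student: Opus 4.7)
The plan is to verify that both hypotheses of Corollary \ref{forget1} are met by $M^{2n}$ in each case, apply that corollary to reduce the computation of $I_h(M^{2n})$ to that of $I_c(M^{2n})$, and then invoke the cited result from \cite{Kas15} to conclude that $I_c(M^{2n})$ vanishes in both situations.

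First I would check the dimension condition $2n \geq 7$ of Corollary \ref{forget1}, which is automatic since $n\in\{4,8\}$. Next I would verify the cohomological vanishing hypothesis. Since $M^{2n}$ is $(n-1)$-connected, the Hurewicz theorem and the universal coefficient theorem give $H^{i}(M^{2n};\mathbb{Z})=0$ and $H^{i}(M^{2n};\mathbb{Z}/2)=0$ for $0<i<n$. By Poincar\'e duality applied to the oriented closed manifold $M^{2n}$ (and its mod-$2$ analogue, which needs no orientation), the same vanishing propagates to $n<i<2n$. Therefore the only potentially nonzero cohomology groups, integral or mod-$2$, sit in degrees $0$, $n$, and $2n$. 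Because $n=4$ or $n=8$ is even, every odd degree $i$ satisfies $i\notin\{0,n,2n\}$, so both the integral and the mod-$2$ cohomologies of $M^{2n}$ vanish in all odd degrees. The hypotheses of Corollary \ref{forget1} are thus fulfilled, which yields $I_h(M^{2n})=I_c(M^{2n})$ in each case.

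Finally I would quote \cite[Theorem 2.7((i),(iii))]{Kas15}, which asserts exactly that $I_c(M^{2n})=0$ when $n=4$ and when $n=8$ with $H^{n}(M^{2n};\mathbb{Z})\cong\mathbb{Z}$. Combining with the previous step gives the two desired equalities $I_h(M^{2n})=0$.

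There is no real obstacle here: the corollary is essentially a packaging of the previous theorem with a known input from the literature. The only step that requires any thought is confirming the odd-degree cohomology vanishing, which as indicated is a routine consequence of $(n-1)$-connectedness together with Poincar\'e duality, using crucially that the middle dimension $n$ is even in both cases. If $n$ were odd then $H^{n}(M^{2n};\mathbb{Z})$ would lie in odd degree and the argument would break; this is why only $n=4$ and $n=8$ appear in the statement (and these are of course also exactly the values produced by the Adams Hopf-invariant-one constraint underlying Theorem 2.7 of \cite{Kas15}).
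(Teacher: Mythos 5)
Your argument is exactly the paper's: the corollary is stated there as an immediate consequence of Corollary \ref{forget1} together with Theorem 2.7((i),(iii)) of \cite{Kas15}, and your verification of the odd-degree cohomology vanishing via $(n-1)$-connectedness and Poincar\'e duality just makes explicit what the paper leaves implicit. The proposal is correct and follows essentially the same route.
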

Since the concordance inertia group is a homotopy invariant and by Corollary \ref{forget1}, we have the following result.
\begin{corollary}\label{homoto}
If $M$ and $N$ have the same homotopy type of simply connected closed smooth $m$-manifolds such that the integral and mod-$2$ cohomologies of $M$ vanish in odd degrees, then their homotopy inertia groups are equal.
\end{corollary}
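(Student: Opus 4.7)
The proof will be essentially a two-line assembly of facts already recorded in the paper, so the main task is to check that the hypotheses needed for those facts transfer cleanly from $M$ to $N$. The plan is: first promote the cohomological vanishing hypothesis on $M$ to the same hypothesis on $N$ using homotopy invariance of cohomology; then apply Corollary \ref{forget1} to both $M$ and $N$ to replace the homotopy inertia groups by the concordance inertia groups; finally invoke homotopy invariance of $I_c$ (attributed to Brumfiel and Kawakubo in the introduction) to identify the two.

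In more detail, I would begin by noting that singular cohomology with $\mathbb{Z}$ or $\mathbb{Z}/2$ coefficients is a homotopy functor, so a homotopy equivalence $M \simeq N$ yields isomorphisms $H^{\mathrm{odd}}(N;\mathbb{Z}) \cong H^{\mathrm{odd}}(M;\mathbb{Z}) = 0$ and $H^{\mathrm{odd}}(N;\mathbb{Z}/2) \cong H^{\mathrm{odd}}(M;\mathbb{Z}/2) = 0$. Moreover, since $N$ has the homotopy type of $M$, it is a closed simply connected smooth $m$-manifold, and (for the dimension range $m \geq 7$ required by Corollary \ref{forget1}) both $M$ and $N$ satisfy all the hypotheses of that corollary. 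Thus $I_h(M) = I_c(M)$ and $I_h(N) = I_c(N)$.

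The last step is to combine these identifications with the homotopy invariance of the concordance inertia group, which gives $I_c(M) = I_c(N)$ whenever $M$ and $N$ are homotopy equivalent. Chaining the three equalities yields $I_h(M) = I_c(M) = I_c(N) = I_h(N)$, which is exactly the claim. There is really no obstacle to overcome here: all the substantive work has been done in Theorem \ref{forget} and the quoted results on $I_c$; the corollary is a formal consequence that records the homotopy invariance of $I_h$ under the stated cohomological restriction.
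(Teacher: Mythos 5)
Your proposal is correct and follows essentially the same route as the paper: the paper derives the corollary directly from Corollary \ref{forget1} together with the homotopy invariance of the concordance inertia group, exactly as you do. Your explicit remarks that the cohomological hypotheses transfer to $N$ by homotopy invariance of cohomology and that the dimension restriction of Corollary \ref{forget1} is needed are sensible points the paper leaves implicit.
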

\section{Smooth Structures on 7-connected 16-manifolds}\label{sec3}
In \cite[Theorem 2.7(ii)]{Kas15}, it was shown that
the group $\mathcal{C}(M^{2n})$ of concordance classes of smoothings of a closed smooth  $(n-1)$-connected $2n$-manifold $M^{2n}$ is isomorphic to the group of smooth homotopy spheres $\Theta_{2n}$ for $n=4$. In this section, we compute the group $\mathcal{C}(M^{2n})$ for $n=8$.\\
We denote $X^{2n}=\mathbb{S}^{n}\bigcup_{g}\mathbb{D}^{2n}$, where $g:\mathbb{S}^{2n-1}\to \mathbb{S}^{n}$ is an attaching map. Let $i_{n}:[\mathbb{S}^n;SPL]\to[\mathbb{S}^n;SG]$ and $\iota_{-}^*:[X, -] \to [\mathbb{S}^n, -]$ be the induced maps by the natural maps $i:SPL\to SG$ and $\iota:\mathbb{S}^n\to X$, respectively. Denote by $f_{X}:X\to \mathbb{S}^{2n}$, a degree one map.
\begin{theorem}\label{teclem2}
	\indent
\begin{itemize}
\item[(i)] $\iota_{PL}^{*}:[X^{16}, SPL] \to [\mathbb{S}^8, SPL]$ is surjective.
\item[(ii)] $[X^{16}, SG]$ is isomorphic to either $\mathbb{Z}_4\oplus \mathbb{Z}_2~~{\rm or}~~\mathbb{Z}_2\oplus \mathbb{Z}_2\oplus \mathbb{Z}_2$.
\end{itemize}
\end{theorem}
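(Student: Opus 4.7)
The plan is to apply the Puppe cofibration sequence
$$\mathbb{S}^{15} \xrightarrow{g} \mathbb{S}^8 \xrightarrow{\iota} X^{16} \xrightarrow{f_X} \mathbb{S}^{16} \xrightarrow{\Sigma g} \mathbb{S}^9 \to \cdots$$
and feed it through the representable functors $[-, SPL]$ and $[-, SG]$, reducing each part of the theorem to a computation in the stable stems.

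For part (i), the exact sequence
$$\pi_{16}(SPL) \longrightarrow [X^{16}, SPL] \xrightarrow{\iota_{PL}^{*}} \pi_8(SPL) \xrightarrow{g^*} \pi_{15}(SPL)$$
reduces surjectivity of $\iota_{PL}^{*}$ to showing $g^* = 0$. First I would compute $\pi_8(SPL)$ using the fibration $SPL \to SG \to G/PL$: the values $\pi_9(G/PL) = 0$ and $\pi_8(G/PL) = L_8(\mathbb{Z}) = \mathbb{Z}$, combined with the fact that $\pi_8(SG) = \pi_8^S = \mathbb{Z}_2 \oplus \mathbb{Z}_2$ is torsion, give an isomorphism $i_*: \pi_8(SPL) \xrightarrow{\cong} \pi_8^S$. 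Since $H^8(M^{16}; \mathbb{Z}) \cong \mathbb{Z}$ carries a unimodular intersection form, the attaching map $g$ has Hopf invariant $\pm 1$ and hence stabilizes to a generator $\sigma \in \pi_7^S = \mathbb{Z}_{240}$ (up to an element in the kernel of suspension). On the generators $\nu^2, \epsilon$ of $\pi_8^S$, the classical Toda relation $\nu\sigma = 0$ forces $\nu^2\sigma = 0$, and $\epsilon\sigma = 0$ is read off from Toda's tables for $\pi_{15}^S$; hence $g^*: \pi_8(SG) \to \pi_{15}(SG)$ vanishes. To promote this to $\pi_{15}(SPL)$, I would use naturality along $i: SPL \to SG$: $g^*(\alpha)$ lies in $\ker(i_*: \pi_{15}(SPL) \to \pi_{15}(SG))$, and the long exact sequence identifies this kernel with the image of $\pi_{16}(G/PL) = \mathbb{Z}$ (the preceding map $\pi_{16}(SG) = (\mathbb{Z}_2)^2 \to \mathbb{Z}$ vanishing on torsion grounds). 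Since $\mathbb{Z}$ has no $2$-torsion, the order-$2$ element $g^*(\alpha)$ must vanish.

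For part (ii), applying $[-, SG]$ to the same Puppe sequence and using $[\mathbb{S}^n, SG] = \pi_n^S$ gives
$$\pi_9^S \xrightarrow{(\Sigma g)^*} \pi_{16}^S \to [X^{16}, SG] \to \pi_8^S \xrightarrow{g^*} \pi_{15}^S.$$
Part (i)'s analysis applied directly at $SG$ yields $\ker(g^*) = \pi_8^S = (\mathbb{Z}_2)^2$. A parallel Toda-product computation of $(\Sigma g)^*: \pi_9^S = (\mathbb{Z}_2)^3 \to \pi_{16}^S = (\mathbb{Z}_2)^2$ identifies $\operatorname{coker}((\Sigma g)^*) \cong \mathbb{Z}_2$, so that $[X^{16}, SG]$ is a $2$-primary abelian group of order $8$ sitting in the short exact sequence
$$0 \to \mathbb{Z}_2 \to [X^{16}, SG] \to \mathbb{Z}_2 \oplus \mathbb{Z}_2 \to 0,$$
whose only abelian extensions are $\mathbb{Z}_4 \oplus \mathbb{Z}_2$ and $(\mathbb{Z}_2)^3$.

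The main obstacle is the extension problem in part (ii): the Puppe sequence only supplies abstract short exactness, so distinguishing $\mathbb{Z}_4 \oplus \mathbb{Z}_2$ from $(\mathbb{Z}_2)^3$ requires detecting an element of order $4$ via a secondary Toda-bracket-type invariant that is not directly visible from the long exact sequence — this is exactly why the theorem statement leaves both possibilities open. The subsidiary difficulty in (i), namely lifting the stable-stem vanishing of $g^*$ to $\pi_{15}(SPL)$, is resolved by the free-abelian $\pi_{16}(G/PL) = \mathbb{Z}$ argument above.
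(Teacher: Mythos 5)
Your overall route is the same as the paper's: run the Puppe sequence of $\mathbb{S}^{15}\stackrel{g}{\to}\mathbb{S}^8\to X^{16}\to\mathbb{S}^{16}$ through $[-,SPL]$ and $[-,SG]$, use the fibration $SPL\to SG\to G/PL$ to identify $\pi_n(SPL)\cong\pi_n^s$ for $n=8,16$, and kill the relevant compositions by Toda relations. Your treatment of part (i) is essentially complete, and in fact the step where you lift the vanishing of $g^*$ from $SG$ to $SPL$ (identifying $\ker(i_{15})$ with the torsion-free image of $\pi_{16}(G/PL)=\mathbb{Z}$, so that an exponent-$2$ element must die) makes explicit something the paper only gestures at with its commutative diagram. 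Two caveats there: the generators of $\pi_8^s\cong\mathbb{Z}_2\oplus\mathbb{Z}_2$ are $\bar{\nu}$ and $\epsilon$, not $\nu^2$ and $\epsilon$ ($\nu^2$ lives in $\pi_6^s$), so the relation you need is Toda's $\bar{\nu}\circ\sigma=0$ (Toda 14.1(iii)), which is a table fact and not a formal consequence of $\nu\sigma=0$; and the stable class of $g$ is not literally $\sigma$ but $a\sigma+x$ with $a$ odd (Hopf invariant $\pm1$) and $x$ of odd order --- harmless, since the groups you compose into have exponent $2$ so the odd part dies, but this should be said (the paper phrases the whole computation with $a\sigma+x$ and never needs $a=\pm1$ for part (i)).

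The genuine gap is in part (ii). Everything rests on the claim $\operatorname{coker}\bigl((\Sigma g)^*:\pi_9^s\to\pi_{16}^s\bigr)\cong\mathbb{Z}_2$, which you assert as ``a parallel Toda-product computation'' without carrying it out; but this is exactly the nontrivial content of the statement. One must evaluate $(\Sigma g)^*$ on the three generators $\nu^3,\ \mu,\ \eta\epsilon$ of $\pi_9^s\cong(\mathbb{Z}_2)^3$ against $[\Sigma g]=a\sigma+x$: the relations $\nu\sigma=0$ and $\sigma\epsilon=0$ kill $\nu^3\circ a\sigma$ and $\eta\epsilon\circ a\sigma$, while $\sigma\mu=\eta\rho\neq 0$ shows $\mu\circ a\sigma=\pm a\,\eta\rho$ survives --- and here the parity of $a$ (i.e.\ Hopf invariant one, which your unimodularity remark supplies) is genuinely needed, since for $a$ even the image would be $0$, the cokernel $(\mathbb{Z}_2)^2$, and $[X^{16},SG]$ would have order $16$, contradicting the stated conclusion. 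So the dichotomy ``image $=\mathbb{Z}_2$, hence order $8$, hence $\mathbb{Z}_4\oplus\mathbb{Z}_2$ or $(\mathbb{Z}_2)^3$'' is correct, but as written your proposal has not proved the only step that distinguishes these orders; the extension ambiguity you highlight at the end is, by contrast, exactly where the paper also stops. Supplying the three products above (this is the paper's use of Toda 14.1(i),(ii) together with anticommutativity of $\pi_*^s$) closes the gap.
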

The proof of Theorem \ref{teclem2} requires one fact which we prove below:
\begin{lemma}\label{teclem1}
Let $n\equiv ~0~ {\rm mod~ 4}, n>0$. Then $i_n:[\mathbb{S}^n, SPL] \to [\mathbb{S}^n, SG]$ is an isomorphism.
\end{lemma}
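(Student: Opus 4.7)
The plan is to translate the statement into homotopy groups and then feed the result into the long exact sequence of the fibration $PL\to G\to G/PL$, using Sullivan's calculation of $\pi_*(G/PL)$.

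First, since $SPL$ and $SG$ are $H$-spaces, I would identify $[\mathbb{S}^n,SPL]=\pi_n(SPL)$ and $[\mathbb{S}^n,SG]=\pi_n(SG)$ as noted in the paper's conventions (the action of $\pi_1$ is trivial on an $H$-space). For $n\geq 1$, $SPL$ and $SG$ are the identity components of $PL$ and $G$, so $\pi_n(SPL)=\pi_n(PL)$ and $\pi_n(SG)=\pi_n(G)$. Under these identifications, $i_n$ is exactly the map $\pi_n(PL)\to \pi_n(G)$ induced by the natural inclusion $i:PL\to G$.

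Next, I would invoke the fibration $PL\to G\to G/PL$, whose long exact sequence includes the portion
\begin{equation*}
\pi_{n+1}(G/PL) \longrightarrow \pi_n(PL) \stackrel{i_*}{\longrightarrow} \pi_n(G) \stackrel{\partial}{\longrightarrow} \pi_n(G/PL) \longrightarrow \pi_{n-1}(PL).
\end{equation*}
Sullivan's computation of the homotopy groups of $G/PL$ gives $\pi_k(G/PL)=0$ for $k$ odd, $\pi_k(G/PL)\cong \mathbb{Z}$ for $k\equiv 0\pmod 4$ and $k>0$, and $\pi_k(G/PL)\cong \mathbb{Z}_2$ for $k\equiv 2\pmod 4$. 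For $n\equiv 0 \pmod 4$ with $n>0$, this yields $\pi_{n+1}(G/PL)=0$ (since $n+1$ is odd), so the sequence reduces to
\begin{equation*}
0\longrightarrow \pi_n(PL) \stackrel{i_*}{\longrightarrow} \pi_n(G) \stackrel{\partial}{\longrightarrow} \mathbb{Z}.
\end{equation*}
This already gives injectivity of $i_n$.

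For surjectivity, I would observe that $\pi_n(G)\cong \pi_n^S$ is the $n$-th stable stem, which is a finite abelian group for $n>0$. Therefore any homomorphism $\pi_n(G)\to\mathbb{Z}$ must be zero, so $\partial=0$, exactness forces $i_*$ to be surjective, and we conclude that $i_n:[\mathbb{S}^n,SPL]\to[\mathbb{S}^n,SG]$ is an isomorphism. The proof is essentially a bookkeeping argument built on Sullivan's calculation together with the finiteness of stable stems; I do not anticipate a genuine obstacle, only the need to state the identifications of sets of homotopy classes with homotopy groups carefully before quoting the exact sequence.
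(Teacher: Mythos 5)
Your argument is correct and is essentially the paper's own proof: both use the long exact homotopy sequence of the fibration $SPL\to SG\to G/PL$ (your use of $PL\to G\to G/PL$ is equivalent on $\pi_n$ for $n\geq 1$), with $\pi_{n+1}(G/PL)=0$ giving injectivity and the finiteness of $\pi_n(SG)\cong\pi_n^s$ against $\pi_n(G/PL)\cong\mathbb{Z}$ giving surjectivity. No substantive difference to report.
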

\begin{proof}
	Consider the homotopy exact sequence for the fibration $SPL\to SG\to G/PL$:
	\begin{equation}\label{longSG}
	\cdots\longrightarrow \pi_{n+1}(SG)\stackrel{}{\longrightarrow}\pi_{n+1}(G/PL)\stackrel{}{\longrightarrow}\pi_{n}(SPL)\stackrel{i_n}{\longrightarrow}\pi_{n}(SG)\stackrel{}{\longrightarrow}\pi_{n}(G/PL)\to\cdots
	\end{equation}
	and by using the fact $\pi_n(SG)=\pi_n^s$ is a finite group, $\pi_{n+1}(G/PL)=0$ and $\pi_{n}(G/PL)=\mathbb{Z}$, we have that $i_n:[\mathbb{S}^n, SPL] \to [\mathbb{S}^n, SG]$ is an isomorphism.
\end{proof}
\paragraph{{\bf Proof of Theorem \ref{teclem2}:}}
(i): There is a commutative diagram
\begin{equation}\label{digram1}
 \begin{CD}
 @. @. @.  @. [\mathbb{S}^{16}, SPL]\\
 @. @.  @.   @.  @VV{\cong} V\\
 @. @. @.  @. [\mathbb{S}^{16}, SG]\\
 @. @.  @.   @.  @VVV\\
 @. @. @.  @. \mathbb{Z}\cong [\mathbb{S}^{16}, G/PL]\\
 @. @. @.   @.  @VVV\\
 [\mathbb{S}^{9},SPL]@>S(g)^{*}>> [\mathbb{S}^{16},SPL]@>f_{X}^{*}>> [X^{16}, SPL] @>\iota_{PL}^{*}>> [\mathbb{S}^{8}, SPL]  @>g^{*}>> [\mathbb{S}^{15}, SPL]\\
@VV i_{9}V @V{\cong}V i_{16}V  @VVV   @V{\cong}V i_{8}V     @VVi_{15}V\\
[\mathbb{S}^{9},SG]@>S(g)^{*}>> [\mathbb{S}^{16},SG]@>f_{X}^{*}>>[X^{16}, SG]@>\iota_{G}^{*}>> [\mathbb{S}^{8}, SG] @>g^{*}>>  [\mathbb{S}^{15}, SG]\\
@VVV @. @.  @.   @.\\
[\mathbb{S}^{9}, G/PL]=0 @. @.  @.   @.\\
 \end{CD}
\end{equation}
 in which, the homomorphism $$i_{n}:\pi_n(SPL)\to \pi_{n}(SG)$$ is an isomorphism by Lemma \ref{teclem1}, where $n=8, 16$ and the rows and columns are part of the long exact sequences obtained from the cofiber sequence $\mathbb{S}^{15}\to \mathbb{S}^{8}\to X$ and the fiber sequence $SPL\to SG\to G/PL$, respectively. From this commutative diagram, it is seen that  $$\iota_{PL}^{*}:[X^{16}, SPL] \to [\mathbb{S}^8, SPL]$$ is surjective if and only if $$g^*:\pi_8(SG)\to \pi_{15}(SG)$$  is the zero homomorphism. To show that $g^*:\pi_8(SG)\to \pi_{15}(SG)$  is the zero homomorphism, we will use Toda's notation \cite[p.189]{Tod62} for special elements in the stable stems $\pi_n^s$. Recall that $\pi_{*}^s$ equal the direct sum of the $\pi_n^s$ is an anti-commutative graded ring with respect to composition as multiplication and $\pi_n(SG)$ can be identified with the $n$th stable homotopy group of sphere $\pi_n^s$; i.e., $\pi_n(SG)=\pi_n^s$ for $n\geq 1$. This fact is used to identify the map $$g^*:\pi_8(SG)\to \pi_{15}(SG)$$
 with the map $$(S^q(g))^*:\pi_{8+q}(\mathbb{S}^q)\to \pi_{15+q}(\mathbb{S}^q),$$ where $S^k(g)$ is the $k$th iterated suspension of $g$. Since $\pi_8^s=\mathbb{Z}_{2}(\bar{v})\oplus\mathbb{Z}_{2}(\epsilon)$, where $\bar{v}$ and $\epsilon$ are the generators of the first and second components of $\pi_8^s$. Let $f:\mathbb{S}^{8+q}\to \mathbb{S}^{q}$  represents either $\bar{v}$ or $\epsilon$. First note that the homotopy class $$[S(g)]=aS(\sigma)+x \in \pi^s_7 \cong \mathbb{Z}_{16}\oplus\mathbb{Z}_{15},$$ where  $a\in\mathbb{Z}$, $S(\sigma)\in \mathbb{Z}_{16}$ is the generator represented by the suspension of the hopf map  $\sigma: \mathbb{S}^{15}\to\mathbb{S}^{8}$ and $x\in \mathbb{Z}_{15}$ has odd order. Using this together with the fact that $\pi_8^s=\mathbb{Z}_{2}(\bar{v})\oplus\mathbb{Z}_{2}(\epsilon)$ has order 4, we see that 
 \begin{align*}
 f\circ S^q(g)&=f\circ S^{q-1}(aS(\sigma)+x)\\
 &=a(f\circ S^{q-1}(S(\sigma)))+(f\circ S^{q-1}(x))\\
 &=a(f\circ S^{q-1}(S(\sigma)))=a(f\circ S^{q}(\sigma)).
 \end{align*}
  Since $S^{q}(\sigma)$ represents the element $S(\sigma)$ in $\pi_7^s$ and $\bar{v}\circ S(\sigma)=\epsilon\circ S(\sigma)=0$ in $\pi_{15}^s$ \cite[Theorem 14.1(iii), p.190]{Tod62}, we have $f\circ S^{q}(\sigma)=0$ in $\pi_{15}^s$. This implies that $f\circ S^q(g)=0$ in $\pi_{15}^{s}$. Therefore the map $(S^q(g))^*:\pi_{8+q}(\mathbb{S}^q)\to \pi_{15+q}(\mathbb{S}^q)$ is the zero homomorphism and hence $g^*:\pi_8(SG)\to \pi_{15}(SG)$ is the zero homomorphism.\\
 (ii):  Recall that the homotopy class $$[S(g)]=a\sigma+x \in \pi^s_7 \cong \mathbb{Z}_{16}\oplus\mathbb{Z}_{15},$$ where $a\in\mathbb{Z}$, $\sigma$ itself denotes the generator of $\mathbb{Z}_{16}$ represented by the suspension of the hopf map  $\sigma: \mathbb{S}^{15}\to\mathbb{S}^{8}$ and $x\in \mathbb{Z}_{15}$ has odd order. Using this together with the fact that $\pi_{16}=\mathbb{Z}_{2}(\eta^*)\oplus\mathbb{Z}_{2}(\eta\circ \rho)$ has order 4, we see that 
  ${\rm Im}(S(g)^{*}:[\mathbb{S}^9,SG]\to [\mathbb{S}^{16},SG])\subseteq \pi^s_{16}$ is generated by the following three elements: $$ \nu^3\circ a\sigma,~~~ \mu\circ a\sigma,~~~ \eta\circ \epsilon\circ a\sigma,$$ where $\nu^3$, $\mu$ and $\eta\circ \epsilon$ are generators of the first, second and third components of $$\pi_9^s=[\mathbb{S}^9, SG]\cong \mathbb{Z}_{2}\oplus\mathbb{Z}_{2}\oplus\mathbb{Z}_{2}$$ respectively.  Now, by \cite[Theorem 14.1((i),(ii)), p.190]{Tod62}, we have $$\nu \circ a\sigma=a(\nu \circ \sigma)=0,$$ $$\sigma\circ \epsilon=0$$ and $\sigma\circ \mu=\eta \circ \rho.$ Using this relations and anti-commutativity of $\pi^s_*$, we have the following relations: $$\nu^3\circ a\sigma=a(\nu^2\circ (\nu \circ \sigma))=0,$$ $$\eta\circ \epsilon\circ a\sigma=a(\eta\circ (\epsilon\circ \sigma))=a(\eta\circ (\sigma\circ \epsilon))=0,$$ 
 $$\mu\circ a\sigma=a(\mu\circ \sigma)=-a(\sigma \circ \mu)=-a(\eta \circ \rho)=a(\rho \circ \mu).$$ This implies that ${\rm Im}(S(g)^{*}:[\mathbb{S}^9,SG]\to [\mathbb{S}^{16},SG])$ is contained in the subgroup of $\pi^s_{16}$ generated by $\rho \circ \mu$. This shows that  ${\rm Im}(S(g)^{*}:[\mathbb{S}^9,SG]\to [\mathbb{S}^{16},SG])=\mathbb{Z}_2$. Now by the exact sequence in the horizontal rows of the commutative diagram (\ref{digram1}) along $SG$ and using the fact we have observed in the proof of (i) that $\iota_{G}^*:[X^{16}, SG]\to [\mathbb{S}^8, SG]$ is surjective, we get that
 $$[X^{16}, SG]\cong \mathbb{Z}_4\oplus \mathbb{Z}_2~~{\rm or}~~\mathbb{Z}_2\oplus \mathbb{Z}_2\oplus \mathbb{Z}_2.$$  This completes the proof of Theorem \ref{teclem2}.\\
 
 The following result follows from Theorem \ref{teclem2}.
\begin{corollary}\label{7con}
Let $M^{16}$ be a closed $7$-connected $16$-manifold such that $H^{8}(M;\mathbb{Z})\cong \mathbb{Z}$. Then $[M, SG]$ is isomorphic to either $\mathbb{Z}_4\oplus \mathbb{Z}_2~~{\rm or}~~\mathbb{Z}_2\oplus \mathbb{Z}_2\oplus \mathbb{Z}_2$.
\end{corollary}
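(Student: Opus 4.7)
The plan is to reduce Corollary \ref{7con} directly to Theorem \ref{teclem2}(ii) by exhibiting $M^{16}$ as a two-cell CW complex of the form $X^{16}=\mathbb{S}^{8}\cup_{g}\mathbb{D}^{16}$ for some $g\in\pi_{15}(\mathbb{S}^{8})$.

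First I would pin down the homotopy type of $M^{16}$. Because $M$ is $7$-connected, Hurewicz and Poincar\'e duality combine to give $\pi_{8}(M)\cong H_{8}(M;\mathbb{Z})\cong H^{8}(M;\mathbb{Z})\cong\mathbb{Z}$, and $H_{k}(M;\mathbb{Z})=0$ for $1\leq k\leq 7$ and $9\leq k\leq 15$. Standard handle theory for highly connected manifolds then produces a handle decomposition of $M$ with a single $0$-handle, no handles of index $1,\dots,7$ (by handle-cancellation against the $0$-handle using $7$-connectedness), a single $8$-handle (because $H_{8}(M;\mathbb{Z})\cong\mathbb{Z}$ is free of rank one), no handles of index $9,\dots,15$ (by the dual argument on $M$ viewed upside down, using the vanishing of $H_{k}$ in those degrees), and a single $16$-handle. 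Passing to the associated CW structure yields $M\simeq \mathbb{S}^{8}\cup_{g}\mathbb{D}^{16}=X^{16}$ for some attaching map $g\colon \mathbb{S}^{15}\to\mathbb{S}^{8}$. This identification is the standard starting point of Wall's classification of $(n-1)$-connected $2n$-manifolds.

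Once $M\simeq X^{16}$ is established, the map $[M,SG]\to[X^{16},SG]$ induced by the homotopy equivalence is a bijection, and Theorem \ref{teclem2}(ii) immediately gives
\[
[M,SG]\;\cong\;[X^{16},SG]\;\cong\;\mathbb{Z}_{4}\oplus\mathbb{Z}_{2}\quad\text{or}\quad \mathbb{Z}_{2}\oplus\mathbb{Z}_{2}\oplus\mathbb{Z}_{2}.
\]

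The only non-routine step is the reduction to the two-cell complex $X^{16}$, and this is a classical observation rather than a real obstacle; the genuine content lies entirely in Theorem \ref{teclem2}(ii), which has already been proved.
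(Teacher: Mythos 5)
Your proposal is correct and follows the paper's own route: the paper likewise deduces the corollary from Theorem \ref{teclem2}(ii) by using that a closed $7$-connected $16$-manifold with $H^{8}(M;\mathbb{Z})\cong\mathbb{Z}$ has the homotopy type of $X^{16}=\mathbb{S}^{8}\cup_{\alpha}\mathbb{D}^{16}$ (the fact the paper attributes to Wall \cite{Wal62} and states explicitly in the proof of Theorem \ref{smoothcayley}). Your handle-theoretic justification of that two-cell structure is just a spelled-out version of the same classical reduction, so the two arguments are essentially identical.
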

\begin{theorem}\label{cohomo}
Let $M^{8}$ be a closed $3$-connected $8$-manifold. Then 
\begin{itemize}
\item[(i)] $[M, SG]\cong \mathbb{Z}_2\oplus \mathbb{Z}_2$.
\item[(ii)] $[M, G/O]\cong (\oplus_{i=1}^{k+1}~~\mathbb{Z})\oplus \mathbb{Z}_2$, where $k$ is the $4$th Betti number of $M$.
\end{itemize}
\end{theorem}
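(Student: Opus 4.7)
The plan is to give $M$ a CW model with cells only in dimensions $0$, $4$, and $8$, and then run the resulting Puppe cofiber sequence against the targets $SG$ and $G/O$. Since $M$ is $3$-connected, $\widetilde H_i(M;\mathbb{Z})=0$ for $i\le 3$, and Poincar\'e duality together with the universal coefficient theorem give
\[H_4(M;\mathbb{Z})\cong H^4(M;\mathbb{Z})\cong\operatorname{Hom}(H_4(M;\mathbb{Z}),\mathbb{Z}),\]
so $H_4(M;\mathbb{Z})\cong\mathbb{Z}^k$ is free of rank $k=b_4(M)$ and $\widetilde H_i(M;\mathbb{Z})=0$ for $i\ne 4,8$. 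Hence $M$ has the homotopy type of $(\bigvee_{i=1}^{k}S^4)\cup_g e^8$ for some attaching map $g\colon S^7\to\bigvee_{i=1}^{k}S^4$, and applying $[-,Y]$ to the Puppe sequence
\[\textstyle S^7\xrightarrow{g}\bigvee_{i=1}^{k}S^4\xrightarrow{\iota}M\xrightarrow{q}S^8\xrightarrow{Sg}\bigvee_{i=1}^{k}S^5\]
produces a long exact sequence of abelian groups for any infinite loop space $Y$.

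For part~(i) I take $Y=SG$, so $[S^r,SG]=\pi_r^s$. Since $\pi_4^s=\pi_5^s=0$ while $\pi_8^s\cong\mathbb{Z}_2\oplus\mathbb{Z}_2$, the relevant portion of the Puppe sequence collapses to $0\to\pi_8^s\xrightarrow{q^*}[M,SG]\to 0$, giving $[M,SG]\cong\mathbb{Z}_2\oplus\mathbb{Z}_2$ at once.

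For part~(ii) I take $Y=G/O$ and use the standard surgery computations $\pi_4(G/O)=\mathbb{Z}$, $\pi_5(G/O)=0$, $\pi_7(G/O)=0$, and $\pi_8(G/O)\cong\mathbb{Z}\oplus\mathbb{Z}_2$. The last of these I would get from the surgery exact sequence $0=L_9\to\Theta_8\to\pi_8(G/O)\to L_8\xrightarrow{\partial}\Theta_7$, using $\Theta_8=\mathbb{Z}_2$, $L_8(\mathbb{Z})=\mathbb{Z}$, and the fact that $\partial$ is surjective onto $\Theta_7=bP_8=\mathbb{Z}_{28}$ with kernel $28\mathbb{Z}\cong\mathbb{Z}$, so the extension $0\to\mathbb{Z}_2\to\pi_8(G/O)\to\mathbb{Z}\to 0$ splits. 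Feeding these into the Puppe sequence leaves only
\[0\longrightarrow\mathbb{Z}\oplus\mathbb{Z}_2\longrightarrow [M,G/O]\longrightarrow\mathbb{Z}^k\longrightarrow 0,\]
which splits because $\mathbb{Z}^k$ is free abelian, giving $[M,G/O]\cong\mathbb{Z}^{k+1}\oplus\mathbb{Z}_2$.

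The only non-formal ingredient is the value $\pi_8(G/O)\cong\mathbb{Z}\oplus\mathbb{Z}_2$; everything else is routine bookkeeping because the relevant obstruction targets $\pi_4^s$, $\pi_5^s$, $\pi_5(G/O)$, $\pi_7(G/O)$ all vanish, so in particular the attaching map $g$ never enters the argument and no delicate stable-homotopy input analogous to Theorem~\ref{teclem2} is required in dimension $8$.
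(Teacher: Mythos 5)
Your proposal is correct and takes essentially the same route as the paper: the homotopy decomposition $M\simeq(\bigvee_{i=1}^{k}\mathbb{S}^4)\cup_g \mathbb{D}^8$ (which the paper quotes from Wall) followed by the Puppe exact sequence against $SG$ and $G/O$, with the vanishing of $\pi_4^s$, $\pi_5^s$, $\pi_5(G/O)$, $\pi_7(G/O)$ making the attaching map irrelevant and the resulting extension over $\mathbb{Z}^k$ split. The only difference is cosmetic: you rederive inputs the paper simply cites, namely the cell structure (via Poincar\'e duality and universal coefficients instead of citing Wall) and $\pi_8(G/O)\cong\mathbb{Z}\oplus\mathbb{Z}_2$ (via the Kervaire--Milnor sequence instead of citing Sullivan), both of which are fine.
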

\begin{proof}
According to Wall \cite {Wal62}, $M$ has the homotopy type of $X=(\bigvee_{i=1}^{k} \mathbb{S}_{i}^4)\bigcup_{g}\mathbb{D}^{8}$, where $k$ is the $4$th Betti number of $M$, $\bigvee_{i=1}^{k} \mathbb{S}_{i}^4$ is the wedge sum of $4$-spheres and $g:\mathbb{S}^{7}\to \bigvee_{i=1}^{k} \mathbb{S}_{i}^4$ is the attaching map of $\mathbb{D}^{8}$. Let $f_X:X\to \mathbb{S}^{8}$ be the collapsing map obtained by identifying $\mathbb{S}^{8}$ with $X/\bigvee_{i=1}^{k} \mathbb{S}_{i}^4$. Then by the naturality of the Puppe sequence, we have the following exact ladder:\\
\begin{equation}\label{longSG}
\cdots\longrightarrow [\bigvee_{i=1}^{k}S \mathbb{S}_{i}^4, Cat]\stackrel{(S(g))^{*}}{\longrightarrow}[\mathbb{S}^{8}, Cat]\stackrel{f_{X}^{*}}{\longrightarrow}[X, Cat]\stackrel{\iota_{Cat}^{*}}{\longrightarrow}[\bigvee_{i=1}^{k} \mathbb{S}_{i}^4, Cat]\stackrel{g^{*}}{\longrightarrow}[\mathbb{S}^{7}, Cat],
\end{equation}
where $S(g)$ is the suspension of the map $g:\mathbb{S}^{7}\to \bigvee_{i=1}^{k} \mathbb{S}_{i}^4$.\\
Using the facts that $$[\bigvee_{i=1}^{k}S \mathbb{S}_{i}^4, Cat]\cong \prod_{i=1}^{k}[\mathbb{S}_{i}^{5}, Cat],$$ and $$[\bigvee_{i=1}^{k} \mathbb{S}_{i}^4, Cat]\cong \prod_{i=1}^{k}[\mathbb{S}_{i}^4, Cat],$$ 
the above exact sequence (\ref{longSG}) becomes 
\begin{equation}\label{longSG1}
\cdots \longrightarrow \prod_{i=1}^{k}[\mathbb{S}_{i}^{5}, Cat] \stackrel{(S(g))^{*}}{\longrightarrow}[\mathbb{S}^{8}, Cat] \stackrel{f_{X}^{*}}{\longrightarrow}[X, Cat]\stackrel{\iota_{Cat}^{*}}{\longrightarrow} \prod_{i=1}^{k}[\mathbb{S}_{i}^4, Cat].
\end{equation}
 If $Cat=SG$ in the exact sequence (\ref{longSG1}) and using the fact that $[\mathbb{S}^n, SG]=0,$ where $n=4,5$, we get that $$[\mathbb{S}^8, SG]\cong [X, SG]\cong \mathbb{Z}_2\oplus \mathbb{Z}_2.$$ Hence $$[M^8,SG]\cong \mathbb{Z}_2\oplus \mathbb{Z}_2.$$ This proves (i).\\
 We now take $Cat=G/O$ and by using the facts that $[\mathbb{S}^8, G/O]=\mathbb{Z}_2\oplus \mathbb{Z}$ and  $[\mathbb{S}^n, G/O]=0$ (\cite{Sul67}), where $n=5,7$, it follows that $$[X, G/O]\cong [M, G/O]\cong (\oplus_{i=1}^{k+1}~~\mathbb{Z})\oplus \mathbb{Z}_2.$$
This proves (ii). 
\end{proof}
Recall that $PL/O$ has a commutative $H$-space structure which makes the fibration $PL/O \to BSO \to BSPL$ into a sequence of $H$-space maps where $BSO$
and $BSPL$ have compatible commutative $H$-space structures coming from the Whitney
sum of bundles \cite{Lan00}. Associated to this fibration, we have the long exact Puppe sequences of abelian groups, for any space $X$,
\begin{equation}\label{digram2}
\begin{CD}
\cdots@>>>[X, SPL]@>\phi_{*}>> [X,PL/O]@>\partial_{X}>> [X, BSO] @>>> [X, BSPL],\\
\end{CD}
\end{equation}
where $\phi_{*}$ is induced by the natural map $\phi:SPL\to PL/O$. When $X=M$ is a smooth manifold, $\partial_{M}$ computes the 
difference a smooth structure makes to the isomorphism class of the stable tangent
bundle. That is, $$\partial_{M}([N,f])={f^{-1}}^{*}T^0(N)-T^0(M)\in \widetilde{KO}^0(M)=[M,BSO],$$ where $T^0(M)$ is the stable tangent bundle of $M$.\\
We now prove the following.
\begin{theorem}\label{smoothcayley}
	Let $M^{16}$ be a closed smooth $7$-connected  $16$-manifold such that $H^{8}(M;\mathbb{Z})\cong \mathbb{Z}$. Then $\mathcal{C}^{Diff}(M)$ contains exactly four elements.
\end{theorem}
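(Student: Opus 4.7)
The plan is to identify $\mathcal{C}^{Diff}(M)$ with $[M, PL/O]$ and compute the latter via the Puppe sequence (\ref{digram2}) together with Theorem \ref{teclem2}. Since $M$ is $7$-connected, $H^{3}(M;\mathbb{Z}_{2}) = H^{4}(M;\mathbb{Z}_{2}) = 0$; as in the proof of Theorem \ref{forget}, $\mathcal{C}^{PL}(M) = 0$, so $M$ carries a unique PL structure up to concordance and smoothing theory identifies $\mathcal{C}^{Diff}(M) \cong [M, PL/O]$. By Wall's classification, $M$ is homotopy equivalent to $X := \mathbb{S}^{8}\cup_{g}\mathbb{D}^{16}$, and Poincar\'e duality forces $g$ to have Hopf invariant $\pm 1$, so the stable class $[g] = a\sigma + x \in \pi_{7}^{s} \cong \mathbb{Z}_{16}\oplus\mathbb{Z}_{15}$ has $a$ odd.

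Plugging $X$ into (\ref{digram2}) yields
$$[X, SO] \longrightarrow [X, SPL] \xrightarrow{\phi_{*}} [X, PL/O] \xrightarrow{\partial_{X}} [X, BSO] \longrightarrow [X, BSPL].$$
A cofiber-sequence computation with Bott periodicity gives $[X, BSO] \cong \mathbb{Z}\oplus\mathbb{Z}$; because $\Theta_{n} = \pi_{n}(PL/O)$ is finite and $\pi_{n}(BSO) = \mathbb{Z}$ is torsion-free for $n = 8, 16$, the maps on homotopy groups are zero, and a four-lemma argument shows $[X, BSO]\to [X, BSPL]$ is injective. Hence $\partial_{X} = 0$, and $[X, PL/O] \cong \mathrm{coker}\bigl([X,SO]\to [X,SPL]\bigr)$.

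For $[X, SPL]$: the fibration $SPL \to SG \to G/PL$ with $L_{9}(\mathbb{Z}) = L_{15}(\mathbb{Z}) = L_{17}(\mathbb{Z}) = 0$ forces $[X, \Omega G/PL] = 0$, while $[X, G/PL] \cong \mathbb{Z}^{2}$ is torsion-free and receives the zero map from the torsion group $[X, SG]$; therefore $[X, SPL] \cong [X, SG]$, which has order $8$ by Theorem \ref{teclem2}(ii). For $[X, SO]$: the cofiber sequence with Bott periodic values $\pi_{8}(SO) = \pi_{9}(SO) = \pi_{16}(SO) = \mathbb{Z}_{2}$ and $\pi_{15}(SO) = \mathbb{Z}$ reduces the computation to $(Sg)^{*}\colon \pi_{9}(SO) \to \pi_{16}(SO)$; since $J(\pi_{9}(SO)) = \langle\mu\rangle \subset \pi_{9}^{s}$ (Adams's element) and $J$ is natural, Toda's identity $\mu\sigma = \eta\rho$ from the proof of Theorem \ref{teclem2}(ii) together with $a$ odd forces this composition to have $J$-image equal to $\eta\rho\neq 0$ in $\pi_{16}^{s}$; hence $(Sg)^{*}$ is an isomorphism and $[X, SO] \cong \mathbb{Z}_{2}$. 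The map $[X, SO] \to [X, SPL]$ is injective because its composite with the isomorphism $\pi_{8}(SPL)\cong\pi_{8}^{s}$ (restriction to the $8$-cell) is the injective map $J\colon\pi_{8}(SO) = \mathbb{Z}_{2}\hookrightarrow \pi_{8}^{s}$.

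Combining everything gives $|[X, PL/O]| = 8/2 = 4$. The principal obstacle is the $[X, SO]$ computation: the conclusion depends on identifying $J(\pi_{9}(SO))$ with Adams's $\mu$-summand and on the Toda identity $\mu\sigma = \eta\rho$; if $(Sg)^{*}$ were zero, $[X, SO]$ would have order $4$ and the final count would collapse to $2$, so this step is the technical heart of the proof.
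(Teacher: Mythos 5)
Your overall strategy (identify $\mathcal{C}^{Diff}(M)$ with $[X,PL/O]$ for $X=\mathbb{S}^8\cup_g\mathbb{D}^{16}$ and run the exact sequence $[X,SO]\to[X,SPL]\to[X,PL/O]\to[X,BSO]$, with $[X,SPL]\cong[X,SG]$ of order $8$ from Theorem \ref{teclem2} and $\partial_X=0$) is a legitimate alternative to the paper's argument, but the step you yourself single out as the technical heart is wrong. The image of $J\colon\pi_9(SO)\to\pi_9^s$ is \emph{not} generated by $\mu$. Since the generator of $\pi_9(SO)\cong KO_{10}$ is $\beta_7\circ\eta^2$, where $\beta_7$ generates $\pi_7(SO)$, one has $J(\pi_9(SO))=\langle\sigma\eta^2\rangle$ (the odd part of $J(\beta_7)$ dies); and $\sigma\eta^2\neq\mu$, because $\mu$ has nonzero $KO$-theoretic $d$-invariant while $\sigma\eta^2$ maps to $\eta_{KO}^2\cdot h(\sigma)=0$ in $KO_9$ as $KO_7=0$ (equivalently, $\sigma\eta^2$ is detected in Adams filtration $3$ by $h_1^2h_3=h_2^3$, whereas $\mu$ lives in filtration $5$). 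Consequently, using Toda's relation $\eta\sigma=\bar\nu+\epsilon$ together with the very relations $\bar\nu\sigma=\epsilon\sigma=0$ that the paper invokes in Theorem \ref{teclem2}, you get $\sigma\eta^2\circ\sigma=\eta(\bar\nu\sigma+\epsilon\sigma)=0$, so $(Sg)^{*}\colon\pi_9(SO)\to\pi_{16}(SO)$ is the \emph{zero} map (Adams' injectivity of $J$ in dimension $16$ then forces the composite itself to vanish), $[X,SO]$ has order $4$, and your restriction-to-the-$8$-cell injectivity argument collapses as well. By your own closing remark this would output $|\mathcal{C}^{Diff}(M)|=2$, which is false.

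The computation can be repaired, but not by fixing the sign of $(Sg)^{*}$: the point is that $[X,SO]\to[X,SPL]$ is \emph{not} injective. Its kernel is exactly $f_X^{*}(\pi_{16}(SO))$, because $J(\pi_{16}(SO))=\langle\eta\rho\rangle$ (the generator of $\pi_{16}(SO)$ is $\beta_{15}\circ\eta$ and $J(\beta_{15})$ is an odd multiple of $\rho$ plus odd torsion), and $\langle\eta\rho\rangle$ is precisely ${\rm Im}(S(g)^{*}\colon\pi_9(SPL)\to\pi_{16}(SPL))$ computed in the proof of Theorem \ref{teclem2}(ii); hence that class already dies in $[X,SPL]$. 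So the image of $[X,SO]$ in $[X,SPL]$ has order $2$ and the count is again $8/2=4$. (Two smaller points: your four-lemma argument for injectivity of $[X,BSO]\to[X,BSPL]$ is not valid as stated, since $\pi_9(BSO)=\mathbb{Z}_2\to\pi_9(BSPL)\cong\pi_8^s$ is not onto; but $\partial_X=0$ follows at once from $[X,PL/O]$ being finite and $[X,BSO]\cong\mathbb{Z}\oplus\mathbb{Z}$ torsion-free. Also note the paper avoids $SO$ altogether: it identifies $\mathcal{C}^{Diff}(M)=[M,Top/O]\cong[M,PL/O]$ and squeezes $[X,PL/O]$ between $\Theta_{16}$, injecting by \cite[Theorem 2.7(iii)]{Kas15}, and $\Theta_8$, onto which it surjects via Theorem \ref{teclem2}(i) and Brumfiel's result, giving $2\cdot2=4$ directly.)
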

\begin{proof}
	Since $PL/O$ fits into a fibration $$PL/O \to Top/O \to Top/PL\simeq K(\mathbb{Z}_2,3),$$ we have the following long exact sequence
	\begin{center} $\cdots[M,\Omega(Top/PL)]=[M,K(\mathbb{Z}_2,2)]=H^2(M;\mathbb{Z}_2)=0\to [M,PL/O]\to [M,Top/O]\to [M,Top/PL]=[M,K(\mathbb{Z}_2,3)]=H^3(M;\mathbb{Z}_2)=0.$
	\end{center}
	This implies that $[M,PL/O]\cong [M,Top/O]$. To show that $\mathcal{C}^{Diff}(M)$ contains exactly four elements, it is enough to show that $[M,PL/O]$ contains exactly four elements by using the identifications $\mathcal{C}^{Diff}(M^{16})=[M,Top/O]$ given by \cite[pp. 194-196]{KS77}. Since $M^{16}$ is a closed $7$-connected $16$-manifold such that $H^{8}(M;\mathbb{Z})\cong \mathbb{Z}$, then $M$ has the homotopy type of $X= \mathbb{S}^8\bigcup_{\alpha}\mathbb{D}^{16}$, where $\alpha:\mathbb{S}^{15}\to \mathbb{S}^8$ is the attaching map of $\mathbb{D}^{16}$.\\
	Consider the following commuting diagram of long exact sequences:
	\begin{equation}\label{digram3}
	\begin{CD}
	\cdots@>>>[X, SPL]@>\phi_{*}>> [X,PL/O]@>\partial_{X}>> [X, BSO] @>>> \cdots\\
	@. @VV\iota^*V  @VV\iota^*V  @VV\iota^*V   @.\\
	\cdots @>>>[\mathbb{S}^8, SPL]@>\phi_{*}>> [\mathbb{S}^8, PL/O]=\mathbb{Z}_2 @>\partial_{\mathbb{S}^8}>> [\mathbb{S}^8, BSO] @>>> \cdots\\
	\end{CD}
	\end{equation}
	and using the facts that $\phi_{*}:[\mathbb{S}^8, SPL]\to [\mathbb{S}^8, PL/O]$ and $\iota^*:[X, SPL]\to [\mathbb{S}^8, SPL]$ are surjective by result of \cite{Bru68} and Theorem \ref{teclem2} respectively, we have that  $\iota^*:[X, PL/O]\to [\mathbb{S}^8, PL/O]$ is  surjective. Since $[\mathbb{S}^8, PL/O]\cong \mathbb{Z}_2$ and the homomorphism $\iota^*:[X, PL/O]\to [\mathbb{S}^8, PL/0]$  fits into the following long exact Puppe sequences of abelian groups
	\begin{equation}\label{}
	\begin{CD}
	\cdots @>>>[\mathbb{S}^{16}, PL/O]@>f_{X}^{*}>> [X,PL/O]@>\iota^*>> [\mathbb{S}^{8}, PL/0] @>\sigma>> [\mathbb{S}^{15}, PL/O],\
	\end{CD}
	\end{equation}
	where $f_{X}^*:[\mathbb{S}^{16}, PL/O]=\Theta_{16}\to [X,PL/O]$ is monic by \cite[Theorem 2.7(iii)]{Kas15}. This implies that $[X,PL/O]\cong [M,PL/O]$ contains exactly four elements. This proves the theorem.
\end{proof}
In \cite{Kas15}, it was proved that there are, up to concordance, exactly two smooth structures on a closed smooth $3$-connected $8$-manifold. Now, Theorem \ref{smoothcayley} implies the following.
\begin{corollary}
Let $M^{16}$ be a closed smooth $7$-connected  $16$-manifold such that $H^{8}(M;\mathbb{Z})\cong \mathbb{Z}$. Then $M^{16}$ has exactly four distinct smooth structures up to concordance.	
\end{corollary}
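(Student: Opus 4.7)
The plan is to observe that this corollary is essentially a direct translation of Theorem \ref{smoothcayley} into the language of smooth structures. Recall that $\mathcal{C}^{Diff}(M^{16})$, as defined in Section 2, consists of concordance classes of pairs $(N,f)$ where $N$ is a closed smooth manifold and $f:N\to M$ is a homeomorphism, with two pairs declared equivalent when there is a diffeomorphism $g:N_1\to N_2$ making $f_2\circ g$ topologically concordant to $f_1$. Under the standard correspondence that sends $(N,f)$ to the pullback smooth structure $f^{*}(\text{smooth structure on } N)$ on the underlying topological manifold $M^{16}$, this set is in natural bijection with the set of smooth structures on $M^{16}$ taken up to concordance. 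This identification is exactly the content of the Kirby--Siebenmann formula $\mathcal{C}^{Diff}(M^{16})=[M,Top/O]$ invoked from \cite[pp.~194--196]{KS77} in the proof of Theorem \ref{smoothcayley}.

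With this identification in hand, the corollary follows immediately: by Theorem \ref{smoothcayley}, the set $\mathcal{C}^{Diff}(M^{16})$ contains exactly four elements, and hence $M^{16}$ carries exactly four distinct smooth structures up to concordance. There is no new analytic or combinatorial input required beyond the preceding theorem, so the only step is to make the above dictionary explicit and cite Theorem \ref{smoothcayley}. Accordingly, there is no genuine obstacle in this argument; all the substantive work---computing $[X^{16},SPL]\to[\mathbb{S}^8,SPL]$ via Lemma \ref{teclem1}, analyzing the Puppe sequence for $PL/O$, and applying \cite[Theorem 2.7(iii)]{Kas15} to conclude injectivity of $f_X^*$---was already carried out in the proof of Theorem \ref{smoothcayley}, which is the sole ingredient needed here.
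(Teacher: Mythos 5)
Your proposal is correct and matches the paper's (implicit) argument exactly: the corollary is stated as an immediate consequence of Theorem \ref{smoothcayley}, using the standard identification of $\mathcal{C}^{Diff}(M^{16})$ with concordance classes of smooth structures via $[M,Top/O]$. No further comment is needed.
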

\section{The Smooth Tangential Structure Set}\label{sec5}
In order to determine the smooth tangential structure set $\mathcal{S}^{TDiff}(M)$, one must compute the surgery obstruction map $\sigma$. We now prove the following.
\begin{theorem}\label{sur}
Let $M^{2n}$ be a closed smooth $(n-1)$-connected $2n$-manifold with $n\equiv 0~ {\rm mod~ 4}$, $n>0$.  Then the surgery obstruction $\sigma:[M^{2n}, SG]\to L_{2n}(\mathbb{Z})\cong \mathbb{Z}$ is the zero map.
\end{theorem}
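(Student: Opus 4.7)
The plan is to prove this geometrically: a tangential normal invariant of $M$ has source with the same signature as $M$, which forces the simply-connected surgery obstruction to vanish identically in every dimension divisible by four. No algebraic computation with $[M,SG]$ is necessary.

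First, since $n\equiv 0\pmod 4$ and $n>0$, the manifold $M^{2n}$ is simply connected and $\dim M$ is a multiple of four. The simply-connected surgery obstruction of an arbitrary degree-one normal map $(N,f,b):N\to M$ is then computed by the classical Browder--Novikov formula $\sigma(N,f,b)=\tfrac{1}{8}\bigl(\operatorname{sign}(N)-\operatorname{sign}(M)\bigr)\in L_{2n}(\mathbb{Z})\cong \mathbb{Z}$. The tangential surgery obstruction is obtained by composing the forgetful map $[M,SG]\to[M,G/O]$ induced by $SG\to G/O$ with this ordinary surgery obstruction, so it suffices to show that $\operatorname{sign}(N)=\operatorname{sign}(M)$ for every tangential normal invariant.

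Next I would exploit the defining extra structure of a tangential normal invariant. A representative $(N,f,b)$ of a class in $[M,SG]=\mathcal{N}^{TDiff}(M)$ carries a stable bundle isomorphism $b:\nu_N\to \nu_M$ covering $f$; dually $TN$ is stably isomorphic to $f^{*}TM$, so $p_i(N)=f^{*}p_i(M)$ for every Pontryagin class and hence $L(N)=f^{*}L(M)$ for the total Hirzebruch $L$-polynomial. Hirzebruch's signature theorem together with the degree-one identity $f_{*}[N]=[M]$ then yields $\operatorname{sign}(N)=\langle L(N),[N]\rangle=\langle f^{*}L(M),[N]\rangle=\langle L(M),f_{*}[N]\rangle=\operatorname{sign}(M)$, and substitution into the Browder--Novikov formula gives $\sigma(N,f,b)=0$.

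The only delicate point is the very first step, namely the identification of the tangential surgery obstruction with the signature-difference formula; however this is already built into the definitions through the commutative triangle $[M,SG]\to[M,G/O]\to L_{2n}(\mathbb{Z})$, whose second leg is by construction the simply-connected surgery obstruction of the underlying ordinary degree-one normal map. In particular, no computation with the homotopy groups of $SG$ or with the specific cell structure of $(n-1)$-connected manifolds is needed: what really drives the vanishing is only simple connectedness of $M$ together with $\dim M\equiv 0\pmod 4$.
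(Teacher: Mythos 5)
Your proof is correct, but it is a genuinely different argument from the one in the paper. You kill the obstruction directly at the level of representatives: a class in $[M,SG]=\mathcal{N}^{TDiff}(M)$ is a degree-one map $f:N\to M$ covered by a stable isomorphism $\nu_N\cong f^*\nu_M$, hence $TN\cong_s f^*TM$, hence $L(N)=f^*L(M)$, and Hirzebruch plus $f_*[N]=[M]$ gives $\operatorname{sign}(N)=\operatorname{sign}(M)$, so the Browder--Novikov obstruction $\tfrac18(\operatorname{sign}(N)-\operatorname{sign}(M))$ vanishes; the only shared ingredient with the paper is the compatibility (from \cite{CH15}) of the tangential surgery obstruction with the ordinary one through $[M,SG]\to[M,G/O]$. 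The paper instead argues homotopy-theoretically: it first proves Theorem \ref{imge}, namely ${\rm Im}(\phi_*)={\rm Im}(\psi_*)\cong[M,Top/O]$ in $[M,G/O]$ (using Kramer's injectivity of $[M,G/Top]\to[M,BSTop]$, the Wall cell structure of $(n-1)$-connected $2n$-manifolds, and torsion-freeness of $\widetilde{KO}(M)$), and then observes via the surgery exact sequence that anything coming from $[M,Top/O]$ already lies in $\mathcal{S}^{Diff}(M)$ and so has zero obstruction. Your route is more elementary and strictly more general --- it needs only that $M$ is a closed smooth simply connected manifold of dimension divisible by four, with no $(n-1)$-connectivity, no Kramer, and no knowledge of $Top/O$ --- while the paper's route earns its keep because Theorem \ref{imge} is needed again later (e.g.\ for Theorem \ref{tanty} and the homeomorphism/diffeomorphism classification), so the paper gets Theorem \ref{sur} essentially for free from machinery it must build anyway.
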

 Let $\phi_*:[M, SG]\to[M, G/O]$ and $\psi_*:[M, Top/O]\to[M, G/O]$ be the induced maps by the natural maps $\phi:SG\to G/O$ and $\psi:Top/O\to G/O$ respectively. We prove the following.
\begin{theorem}\label{imge}
Let $M^{2n}$ be a closed smooth $(n-1)$-connected $2n$-manifold with $n\equiv 0~ {\rm mod~ 4}$, $n>0$. Then 
\begin{center}
	${\rm Im}(\phi_*:[M^{2n}, SG]\to[M^{2n}, G/O])={\rm Im}(\psi_*:[M^{2n}, Top/O]\to[M^{2n}, G/O])\cong [M^{2n}, Top/O].$
\end{center}
\end{theorem}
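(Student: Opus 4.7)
The plan is to extract from Wall's cell decomposition of $M$ a handful of Puppe-sequence computations and then reduce the theorem to a clean ``finite-versus-torsion-free'' obstruction.

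By Wall, $M$ has the homotopy type of $X=\bigl(\bigvee_{i=1}^{k}\mathbb{S}_{i}^{n}\bigr)\cup_{g}\mathbb{D}^{2n}$, so for any pointed H-space $Y$ the cofibration $\mathbb{S}^{2n-1}\xrightarrow{g}\bigvee_{i=1}^{k}\mathbb{S}_{i}^{n}\to M$ gives the exact Puppe sequence of abelian groups
$$\pi_{n+1}(Y)^{k}\to \pi_{2n}(Y)\to [M,Y]\to \pi_{n}(Y)^{k}\to \pi_{2n-1}(Y).$$
From this I would verify four facts. First, $[M,SG]$ is \emph{finite}, since $\pi_{j}(SG)=\pi_{j}^{s}$ is finite for $j\geq 1$. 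Second, $[M,Top/O]$ is \emph{finite}, since every $\pi_{j}(Top/O)$ is finite (built from $\Theta_{j}$ and a single $\mathbb{Z}/2$ in degree three). Third, $[M,G/Top]$ and $[M,BSO]$ are \emph{torsion-free}: using $\pi_{j}(G/Top)=L_{j}(\mathbb{Z})$ together with $n+1,2n-1$ odd and $n,2n\equiv 0\pmod 4$ collapses the $G/Top$ sequence to $0\to \mathbb{Z}\to [M,G/Top]\to \mathbb{Z}^{k}\to 0$; Bott periodicity gives $\pi_{n}(BSO)=\pi_{2n}(BSO)=\mathbb{Z}$, $\pi_{n+1}(BSO)$ finite, $\pi_{2n-1}(BSO)=0$, with the finite-to-$\mathbb{Z}$ map automatically zero. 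Fourth, $[\Sigma M,G/Top]=0$, because $\Sigma M$ has cells only in the odd dimensions $n+1,2n+1$ and $L_{j}(\mathbb{Z})=0$ for odd $j$.

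With the fourth fact in hand, the Puppe exact sequence of the fibration $Top/O\to G/O\to G/Top$ reads
$$[M,\Omega(G/Top)]=[\Sigma M,G/Top]=0\longrightarrow [M,Top/O]\xrightarrow{\psi_{*}}[M,G/O]\longrightarrow [M,G/Top],$$
so $\psi_{*}$ is injective and $\mathrm{Im}(\psi_{*})\cong [M,Top/O]$. For the equality of images, note that $SG,G/O,G/Top,Top/O,BSO$ are all H-spaces, so the induced maps on $[M,-]$ are group homomorphisms. The composite homomorphisms
$$[M,SG]\to [M,G/O]\to [M,G/Top]\quad\text{and}\quad [M,Top/O]\to [M,G/O]\to [M,BSO]$$
are therefore maps from finite groups to torsion-free groups, and so are both zero. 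By exactness of the Puppe sequences of $Top/O\to G/O\to G/Top$ and $SO\to SG\to G/O$,
$$\phi_{*}[M,SG]\subseteq \ker\!\bigl([M,G/O]\to [M,G/Top]\bigr)=\mathrm{Im}(\psi_{*}),$$
$$\psi_{*}[M,Top/O]\subseteq \ker\!\bigl([M,G/O]\to [M,BSO]\bigr)=\mathrm{Im}(\phi_{*}).$$
Hence $\mathrm{Im}(\phi_{*})=\mathrm{Im}(\psi_{*})\cong [M,Top/O]$.

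The bookkeeping obstacles are modest: the Puppe-sequence computations using $L_{j}(\mathbb{Z})$, Bott periodicity, and the finiteness of $\Theta_{j}$ and $\pi_{j}^{s}$. Conceptually, the theorem rests entirely on the sharp dichotomy between the finite groups $[M,SG]$, $[M,Top/O]$ and the torsion-free groups $[M,G/Top]$, $[M,BSO]$.
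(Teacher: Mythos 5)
Your proof is correct, and its skeleton is the same as the paper's: you run the diagram chase through the fibration sequences $Top/O\to G/O\to G/Top$ and $SG\to G/O\to BSO$, prove $\psi_*$ injective from $[M,\Omega(G/Top)]=0$ exactly as the paper does, and deduce the two inclusions of images from the vanishing of the composites $[M,SG]\to[M,G/Top]$ and $[M,Top/O]\to[M,BSO]$. Where you genuinely differ is in how those two vanishing statements are established: the paper quotes Kramer's result that $\partial_M:[M,G/Top]\to[M,BSTop]$ is monic (its Fact 1) and Yang's result that $\widetilde{KO}(M)=[M,BSO]$ is torsion-free (its Fact 3), whereas you make the argument self-contained by computing from Wall's decomposition and the Puppe sequence that $[M,G/Top]$ and $[M,BSO]$ sit in extensions $0\to\mathbb{Z}\to\cdot\to\mathbb{Z}^k\to 0$ (using $L_{odd}(\mathbb{Z})=0$ and Bott periodicity), hence are torsion-free, while $[M,SG]$ and $[M,Top/O]$ are finite; this buys independence from the two external citations and makes the "finite versus torsion-free" mechanism explicit, at the modest cost of the extra homotopy-group bookkeeping. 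One small labeling point: the kernel identity $\ker\bigl([M,G/O]\to[M,BSO]\bigr)=\mathrm{Im}(\phi_*)$ comes from exactness at $G/O$ in the segment $SG\to G/O\to BSO$ of the fiber sequence $SO\to SG\to G/O\to BSO\to BSG$, not from the segment $SO\to SG\to G/O$ you name; the statement you use is the right one, so this is only a citation slip. You also implicitly need that all the maps involved are H-maps (infinite loop maps) so that the induced maps on $[M,-]$ are homomorphisms; you say this, and it is standard for these surgery fibrations.
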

\begin{proof}
Consider the following commutative diagram :
\begin{equation}\label{digram3}
 \begin{CD}
@.   [M^{2n},\Omega(G/Top)]@. \\
  @.    @VVV    @. \\
  @.   [M^{2n},Top/O]@= [M^{2n},Top/O]\\
  @.  @VV\psi_{*}V   @VV\partial_{M}V \\
 [M^{2n},SG]@>\phi_{*}>> [M^{2n},G/O]@>\partial_{M}>> [M^{2n},BSO]\\
@| @VVV  @VVV  \\
   [M^{2n},SG]@>\phi_{*}>> [M^{2n},G/Top]@>\partial_{M}>> [M^{2n},BSTop]\\
 \end{CD}
 \end{equation}
 In this diagram, the rows and columns are the parts of the long exact sequences obtained from the fiber sequences $SG\to G/O\to BSO$, $SG\to G/Top\to BSTop$ and $Top/O\to G/O\to G/Top$.\\
Now the following three facts used in conjunction with a simple diagram chase in (\ref{digram3}) show that 
\begin{center}
${\rm Im}(\phi_*:[M^{2n}, SG]\to[M^{2n}, G/O])={\rm Im}(\psi_*:[M^{2n}, Top/O]\to[M^{2n}, G/O])\cong [M^{2n}, Top/O]$
\end{center}
 thus proving Theorem \ref{imge}.\\
\paragraph{{\bf Fact 1:}} The homomorphism $\partial_{M}:[M^{2n},G/Top]\to [M^{2n},BSTop]$ is monic for $n\equiv ~0~ {\rm mod~ 4}$.
\paragraph{{\bf Fact 2:}} The homomorphism $\psi_{*}:[M^{2n},Top/O]\to [M^{2n},G/O]$ is monic.
\paragraph{{\bf Fact 3:}}The homomorphism $\partial_{M}:[M^{2n},Top/O]\to [M^{2n},BSO]$ is the zero map.\\
It remains to verify these facts. Fact 1 is due to L. Kramer \cite[Proposition 9.9]{Kra03}.
\paragraph{{\bf Proof of Fact 2:}}
Consider the Puppe's exact sequence (\ref{longSG1}) for the inclusion $i:\bigvee_{i=1}^{i=k}\mathbb{S}_{i}^n\to M^{2n}$ along $Cat=\Omega(G/Top)$ and using the fact that $[\mathbb{S}^n, \Omega(G/Top)]=0$ for even $n$, we get that $[M^{2n},\Omega(G/Top)]=0$. Now it follows from the vertical exact sequence in (\ref{digram3}) that the homomorphism $\psi_{*}:[M^{2n},Top/O]\to [M^{2n},G/O]$ is monic. 
\paragraph{{\bf Proof of Fact 3:}}
Since $[M^{2n},Top/O]$ is a finite group and $\widetilde{KO}(M^{2n})=[M^{2n}, BSO]$ \cite{Yan11} is torsion-free, it follows that the homomorphism $\partial_{M}:[M^{2n},Top/O]\to [M^{2n},BSO]$ is the zero map.\\
\end{proof}
\paragraph{{\bf Proof of Theorem \ref{sur}:}}
The smooth structure set $\mathcal{S}^{Diff}(M^{2n})$ fits into the following surgery exact sequence 
\begin{equation}\label{surgry}
 \mathcal{S}^{Diff}(M) \xhookrightarrow{inj} [M, G/O]\stackrel{\sigma}{\longrightarrow} L_{2n}(\mathbb{Z}).
\end{equation}
Thus, a normal map $\varphi\in [M, G/O]$ represents a homotopy smoothing structure for $M$ if and only if $\sigma(\varphi)=0$. Observe that if $\varphi$ lies in the image $[M^{2n},Top/O]\xhookrightarrow{\psi_*}\mathcal{S}^{Diff}(M^{2n})\xhookrightarrow{inj} [M^{2n},G/O]$, then $\sigma(\varphi)=0$. Therefore the surgery obstruction $\sigma:{\rm Im}(\psi_*:[M^{2n}, Top/O]\to[M^{2n}, G/O])\to L_{2n}(\mathbb{Z})$ is the zero map. Combining this with the following commutative diagram (\cite{CH15}) and Theorem \ref{imge},
\begin{equation}\label{}
 \begin{CD}
 [M^{2n}, SG]@>\sigma>> L_{2n}(\mathbb{Z})\\
 @V\phi_*VV  @|  \\
   [M^{2n},G/O]@>\sigma>> L_{2n}(\mathbb{Z})\\
 \end{CD}
 \end{equation}
 we get that the surgery obstruction $\sigma:[M^{2n}, SG]\to L_{2n}(\mathbb{Z})\cong \mathbb{Z}$ is the zero map. This completes the proof of the theorem.
 \begin{theorem}\label{}
 Let $M^{2n}$ be a closed smooth $(n-1)$-connected $2n$-manifold with $n\equiv 0~ {\rm mod~ 4}$, $n>0$.  Then the map $\eta^t:\mathcal{S}^{TDiff}(M^{2n})\to [M^{2n},SG]$ is a set bijection.
 \end{theorem}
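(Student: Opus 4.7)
The plan is to read off the result directly from the tangential surgery exact sequence displayed in Section 2, combined with Theorem \ref{sur} and a standard computation of the odd-dimensional simply connected $L$-groups. Setting $m=2n$ in that sequence gives
\begin{equation*}
L_{2n+1}(\mathbb{Z}) \xrightarrow{\theta}\mathcal{S}^{TDiff}(M^{2n}) \xrightarrow{\eta^t} \mathcal{N}^{TDiff}(M^{2n}) \xrightarrow{\sigma} L_{2n}(\mathbb{Z}),
\end{equation*}
and recall from Section 2 that $\mathcal{N}^{TDiff}(M^{2n})$ is naturally identified with $[M^{2n},SG]$. So the statement to be proved is that the middle map in this four-term sequence of pointed sets is a bijection.

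First I would verify surjectivity of $\eta^t$. By exactness at $\mathcal{N}^{TDiff}(M^{2n})=[M^{2n},SG]$, an element lies in the image of $\eta^t$ precisely when its surgery obstruction under $\sigma$ vanishes. But Theorem \ref{sur} asserts exactly that $\sigma\colon [M^{2n},SG]\to L_{2n}(\mathbb{Z})\cong \mathbb{Z}$ is the zero map under the hypothesis $n\equiv 0\pmod{4}$, $n>0$. Hence every element of $[M^{2n},SG]$ lies in the image of $\eta^t$.

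Next I would establish injectivity. By exactness at $\mathcal{S}^{TDiff}(M^{2n})$, two elements have the same image under $\eta^t$ iff they lie in the same orbit of the $\theta$-action of $L_{2n+1}(\mathbb{Z})$; thus injectivity reduces to showing $L_{2n+1}(\mathbb{Z})=0$. Since $n\equiv 0\pmod 4$ we have $2n+1\equiv 1\pmod 8$, and the $4$-periodic computation of the simply connected surgery obstruction groups gives $L_{4k+1}(\mathbb{Z})=0$; in particular $L_{2n+1}(\mathbb{Z})=0$. Combining these two observations with the exact sequence yields the desired bijection.

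I do not anticipate a substantial obstacle here: the real work has already been done in Theorem \ref{sur} (where the surgery obstruction was shown to vanish by comparing the images of $[M^{2n},SG]\to[M^{2n},G/O]$ and $[M^{2n},Top/O]\to[M^{2n},G/O]$). The only nontrivial input beyond that theorem is the vanishing of the odd-dimensional simply connected $L$-group, which is standard and needs no further comment.
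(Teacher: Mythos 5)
Your proposal is correct and follows essentially the same route as the paper: the paper likewise notes that $L_{2n+1}(\mathbb{Z})=0$ truncates the tangential surgery sequence to $0\to\mathcal{S}^{TDiff}(M^{2n})\to[M^{2n},SG]\to L_{2n}(\mathbb{Z})$ and then invokes Theorem \ref{sur} for surjectivity. Your write-up merely makes the exactness and injectivity steps more explicit.
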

 \begin{proof}
Since $L_{2n+1}(\mathbb{Z})=0$, the tangential surgery exact sequence for $M^{2n}$ runs as follows:
\begin{equation}\label{tsurgry1}
0\longrightarrow \mathcal{S}^{TDiff}(M^{2n})\stackrel{\eta^t}{\longrightarrow}[M^{2n}, SG]\stackrel{\sigma}{\longrightarrow} L_{2n}(\mathbb{Z}).
\end{equation}
From Theorem \ref{sur}, it follows that the map $\eta^t:\mathcal{S}^{TDiff}(M^{2n})\to [M^{2n},SG]$ is a set bijection.
\end{proof}
\begin{theorem}\label{tanty}
Let $M^{2n}$ be a closed smooth $(n-1)$-connected $2n$-manifold with $n\equiv 0~ {\rm mod~ 4}$, $n>0$. Let $f:N\to M$ be a tangential homotopy equivalence where $N$ is a closed smooth manifold. Then there exist a smooth manifold $\widetilde{N}$ and a homeomorphism $g:\widetilde{N}\to M$  such that $(N, f)$ and $(\widetilde{N}, g)$ represent the same element in $\mathcal{S}^{Diff}(M)$.
 \end{theorem}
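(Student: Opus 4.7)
The plan is to exploit Theorem~\ref{imge} together with the Kirby--Siebenmann identification $\mathcal{C}^{Diff}(M)\cong[M,Top/O]$ in order to realise $[N,f]\in\mathcal{S}^{Diff}(M)$ by a concordance class of smoothings, i.e. by a pair whose comparison map is already a homeomorphism.

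Since $f:N\to M$ is tangential, one can choose a stable bundle map $b:\nu_N\to\nu_M$ covering $f$, thereby promoting $(N,f)$ to a tangential structure $(N,f,b)\in\mathcal{S}^{TDiff}(M)$. Setting $\tau_t:=\eta^t(N,f,b)\in[M,SG]$, the naturality of the normal-invariant construction across the forgetful map $\mathcal{S}^{TDiff}(M)\to\mathcal{S}^{Diff}(M)$ yields $\eta[N,f]=\phi_*(\tau_t)\in[M,G/O]$. By Theorem~\ref{imge} we have $\mathrm{Im}(\phi_*)=\mathrm{Im}(\psi_*)$, so there exists $\tau\in[M,Top/O]$ with $\psi_*(\tau)=\phi_*(\tau_t)=\eta[N,f]$.

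Because $\dim M=2n\ge 8$, Kirby--Siebenmann smoothing theory gives a bijection $\mathcal{C}^{Diff}(M)\cong[M,Top/O]$ (as already used in the proof of Theorem~\ref{smoothcayley}); let $(\widetilde{N},g)\in\mathcal{C}^{Diff}(M)$ be a representative corresponding to $\tau$, so that $\widetilde{N}$ is a smooth manifold and $g:\widetilde{N}\to M$ is a homeomorphism. The naturality square
\[
\begin{CD}
\mathcal{C}^{Diff}(M) @>\cong>> [M,Top/O]\\
@VV F_{Con} V @VV \psi_* V\\
\mathcal{S}^{Diff}(M) @>\eta>> [M,G/O]
\end{CD}
\]
commutes, whence $\eta\bigl(F_{Con}[\widetilde{N},g]\bigr)=\psi_*(\tau)=\eta[N,f]$.

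Finally, because $2n\equiv 0\pmod{8}$, we have $L_{2n+1}(\mathbb{Z})=L_1(\mathbb{Z})=0$, and therefore the simply connected surgery exact sequence forces $\eta:\mathcal{S}^{Diff}(M)\to\mathcal{N}^{Diff}(M)$ to be injective. Hence $F_{Con}[\widetilde{N},g]=[N,f]$ in $\mathcal{S}^{Diff}(M)$, which is exactly the claim. The essential input is Theorem~\ref{imge}, which bridges tangential normal invariants in $[M,SG]$ with topological smoothings in $[M,Top/O]$; the remaining ingredients are naturality of standard surgery-theoretic diagrams together with the vanishing $L_1(\mathbb{Z})=0$. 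The step most likely to need care is verifying commutativity of the naturality square above, but this is a standard feature of the interplay between Kirby--Siebenmann smoothing theory and the Sullivan normal invariant.
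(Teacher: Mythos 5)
Your proposal is correct and follows essentially the same route as the paper: both arguments reduce to showing that the normal invariant of $(N,f)$ lies in ${\rm Im}(\psi_*:[M,Top/O]\to[M,G/O])$ via Theorem \ref{imge}, realize it by a concordance class of smoothings $(\widetilde{N},g)$ with $g$ a homeomorphism, and conclude with the injectivity of $\eta:\mathcal{S}^{Diff}(M)\to[M,G/O]$ coming from $L_{2n+1}(\mathbb{Z})=0$. The only cosmetic difference is in the first step: you obtain $\eta[N,f]\in{\rm Im}(\phi_*)$ by lifting $(N,f)$ to $\mathcal{S}^{TDiff}(M)$ and invoking naturality of tangential normal invariants, whereas the paper notes that the image of $(N,f)$ in $[M,BSO]$ is $(f^{-1})^{*}T^0(N)-T^0(M)=0$ and uses exactness of $[M,SG]\to[M,G/O]\to[M,BSO]$, which amounts to the same thing.
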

 \begin{proof}
 Observe that a tangential homotopy equivalence $f: N\to M$ represents an element of $\mathcal{S}^{Diff}(M)$ and its image in $\widetilde{KO}^0(M)=[M, BSO]$ is given by $(f^{-1})^{*}T^0(N)-T^0(M)=0$. Therefore, by using the third row exact sequence in the diagram (\ref{digram3}) and Theorem \ref{imge}, there exist a smooth manifold $\widetilde{N}$ and a homeomorphism $g:\widetilde{N}\to M$ such that $(N, f)$ and $(\widetilde{N}, g)$ represent the same map in $[M, G/O]$. Thus $(N, f)$ and $(\widetilde{N}, g)$ represent the same element in $\mathcal{S}^{Diff}(M)$.
\end{proof}
We note that Theorem \ref{tanty} implies the following.
\begin{corollary}\label{}
Let $M^{2n}$ be a closed smooth $(n-1)$-connected $2n$-manifold with $n\equiv 0~ {\rm mod~ 4}$, $n>0$. If a smooth manifold $N$ is tangential homotopy equivalent to $M$, then $N$ is homeomorphic to $M$.
\end{corollary}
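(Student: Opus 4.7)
The plan is to deduce this directly from Theorem \ref{tanty}. Assume $f:N\to M$ is a tangential homotopy equivalence with $N$ a closed smooth manifold. By Theorem \ref{tanty}, there exist a closed smooth manifold $\widetilde{N}$ and a homeomorphism $g:\widetilde{N}\to M$ such that $(N,f)$ and $(\widetilde{N},g)$ represent the same class in $\mathcal{S}^{Diff}(M)$.

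Unwinding the equivalence relation defining $\mathcal{S}^{Diff}(M)$, this means that there is a diffeomorphism $h:N\to \widetilde{N}$ such that $g\circ h$ is homotopic to $f$. In particular $g\circ h:N\to M$ is a composition of a diffeomorphism with a homeomorphism, hence is itself a homeomorphism. Therefore $N$ is homeomorphic to $M$, which is the desired conclusion.

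There is essentially no obstacle here: all of the work has been done inside Theorem \ref{tanty}, where the image in $\widetilde{KO}^0(M)=[M,BSO]$ of the class $[N,f]\in\mathcal{S}^{Diff}(M)$ was identified with $(f^{-1})^{*}T^0(N)-T^0(M)$, shown to vanish by tangentiality, and then lifted along $\psi_{*}:[M,Top/O]\to[M,G/O]$ using Theorem \ref{imge} to produce $(\widetilde{N},g)$. The corollary is merely the combinatorial consequence of that lifting at the level of representatives.
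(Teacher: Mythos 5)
Your proof is correct and follows exactly the route the paper intends: the paper states the corollary as an immediate consequence of Theorem \ref{tanty}, and your unwinding of the equivalence relation in $\mathcal{S}^{Diff}(M)$ (a diffeomorphism $h:N\to\widetilde{N}$ with $g\circ h\simeq f$, so $g\circ h$ is a homeomorphism onto $M$) is precisely that implication made explicit.
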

\begin{theorem}\label{tan}
Let $M$ be a closed smooth $3$-connected $8$-manifold. If a smooth manifold $N$ is tangential homotopy equivalent to $M$, then there is a homotopy $8$-sphere $\Sigma$ such that $N$ is diffeomorphic to $M\#\Sigma$.
\end{theorem}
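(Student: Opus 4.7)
The plan is to combine Theorem \ref{tanty} (in the case $n=4$, which is covered since $4\equiv 0\pmod 4$) with the classification $\mathcal{C}(M^{8})\cong\Theta_{8}$ stated at the beginning of Section 4 (citing \cite[Theorem 2.7(ii)]{Kas15}). The first reduces a tangential homotopy equivalence to a homeomorphism up to the equivalence relation of the smooth structure set; the second then identifies every such concordance class with a connected sum $M\#\Sigma$.

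First I would apply Theorem \ref{tanty}: the hypothesis that $f:N\to M$ is a tangential homotopy equivalence, together with $n=4$, yields a smooth manifold $\widetilde{N}$ and a homeomorphism $g:\widetilde{N}\to M$ such that $[(N,f)]=[(\widetilde{N},g)]$ in $\mathcal{S}^{Diff}(M)$. Unwinding the equivalence relation in Definition \ref{smoo.stru}, this produces a diffeomorphism $h:N\to\widetilde{N}$ with $g\circ h\simeq f$; in particular $N$ and $\widetilde{N}$ are diffeomorphic.

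Next, because $g:\widetilde{N}\to M$ is a homeomorphism, the pair $(\widetilde{N},g)$ represents a well-defined class in $\mathcal{C}^{Diff}(M)$. Invoking the identification $\mathcal{C}(M^{8})\cong\Theta_{8}$ (which in this setting is realized by the map $\Sigma\mapsto[M\#\Sigma,\mathrm{id}]$), there exists a homotopy $8$-sphere $\Sigma\in\Theta_{8}$ with $[(\widetilde{N},g)]=[(M\#\Sigma,\mathrm{id})]$ in $\mathcal{C}^{Diff}(M)$. Since concordant smoothings are in particular diffeomorphic, $\widetilde{N}$ is diffeomorphic to $M\#\Sigma$. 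Composing with the diffeomorphism $h:N\to\widetilde{N}$ from the first step yields the required diffeomorphism $N\cong M\#\Sigma$.

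The substantive content is already encapsulated in Theorem \ref{tanty}, whose proof relied on Theorem \ref{imge} and the vanishing of $\widetilde{KO}(M)$-obstructions; once the tangential hypothesis has been converted into a homeomorphism, the remaining step is purely formal because the structure of $\mathcal{C}(M^{8})$ has already been computed. The only point that deserves care is to verify that the equality of classes in $\mathcal{S}^{Diff}(M)$ coming from Theorem \ref{tanty} actually yields a diffeomorphism $N\cong\widetilde{N}$ (and not merely a homotopy equivalence); this follows directly from the definition of $\mathcal{S}^{Diff}(M)$.
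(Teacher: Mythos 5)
Your proposal is correct and follows essentially the same route as the paper: the paper's proof consists precisely of invoking \cite[Theorem 2.7(ii)]{Kas15} to identify $\mathcal{C}^{Diff}(M)$ with $\{[M\#\Sigma] \mid \Sigma\in\Theta_8\}$ and then applying Theorem \ref{tanty} with $n=4$. You merely spell out the unwinding of the equivalence relations in $\mathcal{S}^{Diff}(M)$ and $\mathcal{C}^{Diff}(M)$, which the paper leaves implicit.
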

\begin{proof}
By \cite[Theorem 2.7(ii)]{Kas15}, we have $\mathcal{C}^{Diff}(M)=\left \{ [M\#\Sigma]~~ |~~\Sigma\in \Theta_{8} \right \}$. Now apply Theorem \ref{tanty}.
\end{proof}
As a consequence of \cite[Theorem 1.1]{Kas15} and Theorem \ref{tan}, we get
\begin{corollary}
Let $M$ be a closed smooth $3$-connected $8$-manifold such that $H^4(M;\mathbb{Z})\cong \mathbb{Z}$. If a smooth manifold $N$ is tangential homotopy equivalent to $M$, then $N$ is diffeomorphic to $M$.
\end{corollary}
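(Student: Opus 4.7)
The plan is to reduce the statement immediately to the combination of Theorem~\ref{tan} and the cited inertia group computation \cite[Theorem 1.1]{Kas15}. First I would apply Theorem~\ref{tan} to the hypothesized tangential homotopy equivalence $f\colon N \to M$: since $M$ is a closed smooth $3$-connected $8$-manifold, this gives a homotopy $8$-sphere $\Sigma \in \Theta_8$ such that $N$ is diffeomorphic (as an oriented smooth manifold) to $M \# \Sigma$. This already eliminates the exotic tangential homotopy classes that are not of connected-sum form, leaving only the ambiguity coming from $\Theta_8$.

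The remaining task is then to absorb the connected-sum factor $\Sigma$ into $M$. The additional hypothesis $H^4(M;\mathbb{Z}) \cong \mathbb{Z}$ is exactly the hypothesis needed for \cite[Theorem 1.1]{Kas15}, which asserts that the (full) inertia group $I(M)$ equals $\Theta_8$ for such manifolds; equivalently, for every $\Sigma \in \Theta_8$ there is an orientation preserving diffeomorphism $M \# \Sigma \to M$. Applying this to the $\Sigma$ produced in the previous step yields $N \cong M \# \Sigma \cong M$, which is the desired conclusion. There is no substantive obstacle here: the only delicate inputs, namely Theorem~\ref{tan} (which rests on the computation of $\mathcal{S}^{TDiff}(M)$ via the tangential surgery exact sequence together with the identification $\mathcal{C}^{Diff}(M) = \{[M\#\Sigma]\}$) and \cite[Theorem 1.1]{Kas15} (the inertia group computation for $3$-connected $8$-manifolds with $H^4 \cong \mathbb{Z}$), are already established, so the corollary is a direct composition of these two facts.
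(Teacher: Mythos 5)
Your proposal is correct and follows exactly the paper's own route: the paper states the corollary as a direct consequence of Theorem \ref{tan} (giving $N\cong M\#\Sigma$ for some $\Sigma\in\Theta_8$) combined with \cite[Theorem 1.1]{Kas15}, which for a $3$-connected $8$-manifold with $H^4(M;\mathbb{Z})\cong\mathbb{Z}$ says every homotopy $8$-sphere lies in the inertia group, so $M\#\Sigma\cong M$. Nothing further is needed.
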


\end{document}